\patchcmd\Gread@eps{\@inputcheck#1 }{\@inputcheck"#1"\relax}{}{}
\newtheorem{theorem}{Theorem}[section]
\newtheorem{lemma}[theorem]{Lemma}
\newtheorem{remark}[theorem]{Remark}
\definecolor{light-gray}{gray}{0.95}
\def\centerarc[#1](#2)(#3:#4:#5){\draw[#1] ($(#2)+({#5*cos(#3)},{#5*sin(#3)})$) arc (#3:#4:#5);}
\newcommand{\vertiii}[1]{{\left\vert\kern-0.25ex\left\vert\kern-0.25ex\left\vert #1 
		\right\vert\kern-0.25ex\right\vert\kern-0.25ex\right\vert}}
\numberwithin{equation}{section}
\numberwithin{figure}{section}
\newcommand{\bb}[1]{{\mathbb #1}}
\newcommand{\<}{\big\langle}
\renewcommand{\>}{\big\rangle}
\renewcommand{\epsilon}{\varepsilon}
\newcommand{\R}{\mathbb R}
\newcommand{\Z}{\mathbb Z}
\renewcommand{\P}{\mathbb P}
\newcommand{\T}{\mathbb T}
\newcommand{\E}{\mathbb E}
\newcommand{\gen}{\mathcal{L}}
\newcommand{\bP}{\mathbb{P}}
\newcommand{\bE}{\mathbb{E}}
\title[Moderate Deviations for the Current and tagged particle]{Moderate Deviations for the current and Tagged Particle in Symmetric Simple Exclusion Processes}
\keywords{Moderate deviation; tagged particle; current; exclusion process.}
\author{Xiaofeng Xue}
\address{School of Science, Beijing Jiaotong University, Beijing 100044, China.}
\email{xfxue@bjtu.edu.cn}
\author{Linjie Zhao}
\address{Inria Lille-Nord Europe, Lille 59000, France.}
\email{linjie.zhao@inria.fr}
\begin{document}

	\begin{abstract}
		We prove moderate deviation principles for the tagged particle position and current in one dimensional symmetric simple exclusion processes.  There is at most one particle per site. A particle jumps to one of its two neighbors at rate $1/2$, and the jump is suppressed if there is already one at the target site. We distinguish one particular particle which is called the tagged particle. We first establish a variational formula for the moderate deviation rate functions of the tagged particle positions based on moderate deviation principles from hydrodynamic limit proved by Gao and Quastel \cite{gao2003moderate}. Then we construct a minimizer of the variational formula and obtain explicit expressions for the moderate deviation rate functions.
	\end{abstract}
	
		\maketitle
		
	\section{Introduction}
	
	It has been a long standing problem to investigate the behavior of a \emph{tagged} particle interacting with others, which is closely related to the problem of establishing a rigorous physical basis of Brownian motion.  The main difficulty lies in the fact that the tagged particle itself is in general not Markovian. One of  the simplest interactions between particles is the \emph{exclusion} rule, which means that each site cannot be occupied by more than two particles.  The exclusion process was first introduced by Spitzer\;\cite{spitzer70}, and since then it  has become one of the most popular models in  statistical physics due to its simple structure but rich behaviors, \emph{cf.}\,\cite{liggettips,liggett99}.  The dynamics is as follows: the particle at site $x \in \Z^d,\;d \geq 1,$ waits for an exponential time of parameter one, and then choose a site $y \in \Z^d$ with probability $p(y-x)$, where $p(\cdot)$ is a probability measure on $\Z^d$. If site $y$ is vacant, then the particle at site $x$ jumps to site $y$; otherwise the jump is suppressed and the particle at site $x$ waits for a new exponential time.
	
	Initially, let us put one particle at the origin, and  independently put one at every other site with probability $\rho \in (0,1)$.  We distinguish the particle initially at the origin, and call it the \emph{tagged} particle. In the one dimensional symmetric nearest neighbor case,   Arratia \cite{arratia1983motion} showed that the tagged particle is \emph{sub-diffusive}. More precisely, if we denote by $X_t$ the position of the tagged particle at time $t$, then $ X_t / t^{1/4} $ converges in distribution to the normal distribution with variance $\sqrt{2 / \pi} (1-\rho) / \rho$.  Arratia's proof investigates negative correlation inequalities, and we refer the readers to \cite{DeMasiFerrari02} for a different proof of the above central limit theorem (CLT) based on the relation between current and tagged particle positions. The above CLT was then extended to an invariance principle with respect to the fractional Brownian motion with hurst index $1/4$ in \cite{peligrad2008fractional}. Recently, Sethuraman and Varadhan \cite{sethuraman2013large} established large deviation principles for the position of the tagged particle.
	
	Inspired by the work in \cite{sethuraman2013large}, in this article we consider the moderate deviations of the tagged particle positions, which is the first result in this direction to the best of our knowledge. The proof is based on the following observation:  denote by $J_{x,x+1} (t)$ the net particle current across the bond $(x,x+1)$ up to time $t$, then for $a > 0$,
	\[\{X_t > a\} = \Big\{J_{-1,0} (t) \geq \sum_{x=0}^{a} \eta_t (x) \Big\},\]
	and similarly for $a < 0$,
	\[\{X_t < a\} = \Big\{J_{-1,0} (t) < -  \sum_{x=a}^{-1} \eta_t (x) \Big\}.\]
	Above, $\eta_t (x) = 1$ means there is a particle at site $x$ at time $t$, and $\eta_t (x) = 0$ otherwise. Moreover, formally we have
	\[J_{x,x+1} (t) = \sum_{y >x} \{\eta_t (y) - \eta_0 (y)\}.\]
	Then, following the moderate deviation principles from hydrodynamic limit of symmetric exclusion processes already proved in \cite{gao2003moderate}, the moderate deviation rate function for the current and the tagged particle positions should be given by a variation formula, which is the first main result Theorem \ref{thm:main_1} of the article, and is reminiscent of the contraction principle. We remark that the above strategy has also been used in many other contexts \cite{gonccalves2008central,jara2006nonequilibrium,jara2008quenched,sethuraman2013large}.
	
	The second main contribution of the article is to characterize the moderate deviation rate function explicitly.  The rate functions are quadratic as stated in Theorem \ref{thm:main_2}. To this  end, we construct a minimizer of the variational formula, which permits us to calculate the rate function.
	
	\medspace
	
	{\it Related work}. When the process starts from  stationary measures, Saada \cite{saada1987tagged} proved law of large numbers for the tagged particle positions by proving the ergodicity of the \emph{environment process} as seen from the tagged particle. Then it is a natural question to consider stationary fluctuation.  In the remaining cases except the one dimensional nearest neighbor case, the tagged particle position has been proved to be  diffusive as expected. In the seminal paper \cite{kipnis1986central}, Kipnis and Varadhan have established  a very general result for functional central limit theorems of additive functionals of reversible ergodic Markov chains, based on which they proved invariance principles for the tagged particle positions in symmetric  exclusion processes in all dimensions except the one-dimensional nearest neighbor case. The above approach has been extended to other contexts since then, by Varadhan \cite{varadhan1995self} in the asymmetric mean-zero case , and by Sethuraman, Varadhan and Yau \cite{sethuraman2000diffusive} when $p(\cdot)$ has a drift in dimension $d \geq 3$. We refer the readers to the monograph \cite{komorowski2012fluctuations} for a comprehensive study of the above method. The one dimensional nearest neighbor asymmetric case  was solved by Kipnis using the mapping between exclusion and zero range processes in \cite{Kipnis86}.  In dimensions $d \leq 2$ when the underlying random walk has a drift except the one dimensional nearest-neighbor case, we only know the tagged particle is diffusive as shown by Sethuraman \cite{sethuraman2006diffusive}, and a full CLT or invariance principle remains open.
	Non-equilibrium behaviors of the tagged particle positions have also been investigated in several cases. Law of large numbers was proved in \cite{rezakhanlou1994evolution}. For non-equilibrium fluctuations, \emph{cf.}\;\cite{jara2006nonequilibrium,jara2008quenched} for the one dimensional nearest neighbor case with/without bond disorder, and \cite{jara2009nonequilibriumlong}  for symmetric exclusion process with long jumps. 
	
	The motion of the tagged particle has also been used to check the validity of the Einstein relation \cite{landim1998driven,loulakis2002einstein,loulakis2005mobility}. Very recently, a heat kernel bound for the tagged particle was proved in \cite{giunti2019heat} for symmetric exclusion. The behavior of the particle has also been investigated in other interacting particle processes, such as in zero range processes \cite{jara2009nonequilibrium,jara2013nonequilibrium,sethuraman2007diffusivity} and in stirring-exclusion processes \cite{chen2013limit}.  Besides on the integer lattice $\Z^d$, the tagged particle was also considered on  regular trees \cite{chen2019limit} and on Galton--Watson trees \cite{gantert2020speed}.

	\medspace
	
	The rest of the paper is organized as follows. In  Section \ref{sec:results}, we first recall moderate deviations from hydrodynamic limit in the symmetric exclusion process in Theorem \ref{thm:mdphl}. As stated above, moderate deviation principles for the tagged particles positions and currents are presented in Theorem \ref{thm:main_1} in terms of a variational formula, which is calculated explicitly in Theorem \ref{thm:main_2}. We first prove Theorem \ref{thm:main_2} in Section \ref{sec:proofmain2}, and postpone the proof of Theorem \ref{thm:main_1} to  Section \ref{sec:proofmain1} since the proof  utilizes the properties of rate functions.

	\section{Notation and Results}\label{sec:results}
	
	The one dimensional symmetric simple exclusion process (SSEP) is a Markov process on $\Omega=\{0,1\}^{\Z}$ with infinitesimal generator acting on local functions as
	\begin{equation}\label{generator}
		\gen f (\eta) = \frac{1}{2} \sum_{x \in \Z}  \big\{f(\eta^{x,x+1})  -  f(\eta) \big\}.
	\end{equation}
	Above, a function $f: \Omega \rightarrow \R$ is local if it depends on $\eta$ only through a finite number of sites, and  $\eta^{x,y}$ is the configuration obtained from $\eta$ by exchanging the values of $\eta(x)$ and $\eta(y)$, that is,
	\[\eta^{x,y} (z) = \begin{cases}
		\eta (z), \quad z \neq x,y,\\
		\eta (y), \quad z = x,\\
		\eta (x),\quad z = y.
	\end{cases}\]
	Denote by $\{\eta_t\}_{t \geq 0}$  the Markov process with generator $\gen$. The particles in the SSEP are indistinguishable. We distinguish one particular particle, put it initially at the origin, and call it the \emph{tagged} particle. Denote by $X(t)$ the position of the tagged particle at time $t$. Obviously, $X(0)=0$.
	
	Let $\nu_\rho$, $\rho \in (0,1)$, be the product measure on $\Omega$ with marginals given by
	\[\nu_\rho \{\eta: \eta (x) = 1\} = \rho, \quad \forall x \in \Z.\]
	It is well known that $\nu_\rho$ is reversible and ergodic for the SSEP, \emph{cf.}~\cite{liggettips} for example. Since we are interested in the motion of the tagged particle, we shall start the process from the measure $\nu_\rho^*$ obtained from  $\nu_\rho$ conditioned on having a particle at the origin, i.e.,
	\[\nu_\rho^* (\cdot) = \nu_\rho (\cdot | \eta(0) = 1).\]

	We shall also investigate the behavior of the current in the SSEP. More precisely, for $x \in \bb{Z}$, let $J_{x,x+1} (t)$ be the net number of particles across the bond $(x,x+1)$ up to time $t$, i.e., the number of particles jumping from $x$ to $x+1$ minus the number of particles jumping from $x+1$ to $x$ during the time interval $[0,t]$.
	
	Throughout the paper, we use $\bP$ to denote the measure on $D([0,\infty),\Omega)$ associated to the exclusion process  $\eta_t$ and initial measure $\nu_\rho^*$, and by $\E$ the corresponding expectation.
	
	Recall a function $f: \R \rightarrow \R$ is a Schwartz function if it is smooth and $\sup_{u \in \R} |u^m f^{(n)} (u)| < \infty$ for all\footnote{Here, $\Z_+ = \{0,1,2,\ldots\}.$} $m, n \in \Z_+$.  Let $\mathcal{S} (\bb{R})$ be the space of all Schwartz functions, and  $\mathcal{S}^\prime(\bb{R})$ be the dual of $\mathcal{S} (\bb{R})$. For $m,n \in \Z_+$ and $A,B \subset \R$, we use $C_c^{m,n} (A \times B)$ to denote the space of functions which are $m$-th (resp. $n$-th) continuously differentiable in the first (resp. second) variable and have compact support in $A \times B$. 
	
	\subsection{Moderate deviations from hydrodynamic limit.} In this subsection, we recall moderate deviation principles for the empirical measure of the process already proved by Gao and Quastel\,\cite{gao2003moderate}. For each $N \geq 1$, define the centered empirical measure $\mu^N_t$ as 
	\[\mu^N_t (u) = \frac{1}{a_N} \sum_{x \in \Z} \big( \eta_{tN^2} (x) - \rho \big) \delta_{x/N} (u),\quad u \in \R,\]
	where $\delta_v (u)$ is the Delta function, and $\{a_N\}_{N \geq 1}$ is any positive sequence such that
	\[\lim_{N \rightarrow \infty} \frac{a_N}{N} = \frac{\sqrt{N}}{a_N} = 0.\]
	We regard $\mu^N_t$ as an random element in the space $\mathcal{S}^\prime(\bb{R})$ of tempered distributions. Fix a time horizon $T > 0$. For $G \in C^{1,2}_c ( [0,T]  \times \R)$ and $\mu \in D([0,T],\mathcal{S}^\prime (\bb{R}))$,  let
	\[l(\mu, G)=\left\langle\mu_{T}, G_T\right\rangle-\left\langle\mu_{0}, G_0 \right\rangle-\int_{0}^{T}\langle\mu_{s},\left(\partial_s+(1/2) \partial_u^2\right) G_s \rangle d s.\]
	Above, $G_s(u) = G(s,u)$. The rate function $\mathcal{Q} (\mu) := \mathcal{Q}_{dyn} (\mu)+ \mathcal{Q}_0 (\mu_0)$ is given by
	\begin{equation*}
		\begin{aligned} \mathcal{Q}_{dyn}(\mu) &=\sup _{G \in C^{1,2}_c ( [0,T]  \times \R)}\left\{l(\mu, G)-\frac{\rho(1-\rho)}{2}\int_0^T  \int_\R (\partial_u G)^2 (s,u)\,du\,ds\right\} \\ \mathcal{Q}_{0}\left(\mu_{0}\right) &=\sup _{\phi \in C_c \left(\R\right)}\left\{\left\langle\mu_{0}, \phi\right\rangle-\frac{\rho(1-\rho)}{2} \int_{\R} \phi^2 (u) d u\right\} \end{aligned}
	\end{equation*}
	
	\medspace
	
	\begin{theorem}[Moderate deviations from hydrodynamic limit \cite{gao2003moderate}]\label{thm:mdphl}
		The  process $\{\mu^N_t\}_{0 \leq t \leq T}$ satisfies moderate deviation principle with decay rate $a_N^2 / N$ and with rate function $\mathcal{Q} (\mu)$, more precisely, for any open set $O \in D([0,T], \mathcal{S}^\prime (\R))$,
		\[\liminf_{N \rightarrow \infty} \frac{N}{a_N^2} \P \big( \mu^N_\cdot \in O \big) \geq - \inf_{\mu \in O} \mathcal{Q} (\mu),\]
		and for any closed set $C \in D([0,T], \mathcal{S}^\prime (\R))$,
		\[\liminf_{N \rightarrow \infty} \frac{N}{a_N^2} \P \big( \mu^N_\cdot \in C \big) \leq - \inf_{\mu \in C} \mathcal{Q} (\mu).\]
	\end{theorem}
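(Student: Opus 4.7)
The plan is to follow the standard Donsker--Varadhan / Kipnis--Olla--Varadhan exponential martingale method, adapted to the moderate deviation scale $a_N^2/N$, which sits strictly between the CLT scale $a_N\sim\sqrt{N}$ and the LDP scale $a_N\sim N$. Because the SSEP generator is quadratic in $\eta$ and the centred empirical measure linearises it, the limiting rate function is quadratic in $\mu$ rather than the Varadhan-type entropy functional that appears at the LDP scale; the $L^2$ term $\tfrac{\rho(1-\rho)}{2}\int(\partial_u G)^2$ in $\mathcal Q_{dyn}$ will come from the quadratic variation of a Dynkin martingale.

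For the upper bound, I would apply Dynkin's formula to $\langle \mu^N_t, G_t\rangle$ with $G\in C^{1,2}_c([0,T]\times\R)$, producing a martingale $M^{N,G}_t = l(\mu^N,G)|_{[0,t]} + o(1)$ where the $o(1)$ collects the lattice-to-continuum error in replacing the discrete Laplacian by $\tfrac12\partial_u^2$. The predictable quadratic variation contains a term of the form $\sum_x \eta(x)(1-\eta(x+1))(\partial_u G)^2$, which a super-exponential one-block replacement converts to $\rho(1-\rho)\int_0^T\!\!\int_\R(\partial_u G_s)^2\,du\,ds$. Exponentiating at rate $a_N^2/N$ and applying Chebyshev then yields
\[
\limsup_{N\to\infty}\frac{N}{a_N^2}\log\P(\mu^N_\cdot\in C)\le -\inf_{\mu\in C}\sup_{G}\Big\{l(\mu,G)-\tfrac{\rho(1-\rho)}{2}\int_0^T\!\!\int_\R(\partial_u G_s)^2\,du\,ds\Big\}-\mathcal Q_0(\mu_0).
\]
The extra contribution $\mathcal Q_0$ arises from a Cramér-type MDP for the product measure $\nu_\rho^*$, obtained from the Taylor expansion $\E\big[\exp\bigl(\tfrac{a_N}{N}\sum_x\phi(x/N)(\eta_0(x)-\rho)\bigr)\big]=\exp\bigl(\tfrac{a_N^2}{N}\cdot\tfrac{\rho(1-\rho)}{2}\int\phi^2+o(a_N^2/N)\bigr)$. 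Exponential tightness in $D([0,T],\mathcal S'(\R))$ would follow from Aldous's criterion applied to $\langle\mu^N_\cdot,G\rangle$ over a countable dense family of $G$, combined with tail bounds coming from the same exponential martingale.

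For the lower bound, given any $\mu$ with $\mathcal Q(\mu)<\infty$, a direct variational argument produces a maximising test function $G_\mu$ (essentially the weak solution of the Euler--Lagrange equation associated with $\mathcal Q_{dyn}$). I would then tilt $\P$ by the normalised exponential martingale $\mathcal E^{N}_{G_\mu}$, verify by a second-moment argument under the tilted measure that $\mu^N_\cdot\to\mu$ in probability, and recover $\mathcal Q(\mu)$ as the leading-order relative entropy between the tilted and original measures. A density argument in the level sets of $\mathcal Q$, by mollification of $\mu$ and truncation in space, would extend this lower bound to arbitrary open sets.

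The hard part will be the super-exponential one-block replacement at the MDP scale. Unlike the LDP, where the Donsker--Varadhan entropy inequality gives the luxury of an error of order $e^{-cN}$, at the MDP scale one only has $e^{-c a_N^2/N}$ to spend; this forces a two-scale coarse graining on mesoscopic blocks of size $\ell$ with $1\ll\ell\ll N/a_N$, and relies on the sharp logarithmic Sobolev inequality for SSEP to bound the Dirichlet form of the tilted measure. A secondary, but still delicate, issue is tightness in $\mathcal S'(\R)$: since the process lives on the whole line, one must uniformly control $\sum_{|x|>RN}|\eta_t(x)-\rho|$ at exponential rate $a_N^2/N$, typically via exponential finite-speed-of-propagation estimates and duality with the underlying simple random walk.
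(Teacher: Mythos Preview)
The paper does not prove this theorem at all: it is quoted as a result of Gao and Quastel \cite{gao2003moderate}, with Remark~\ref{rmk:initialmeasure} noting only that their argument on the torus $\mathbb T_N$ with initial law $\nu_\rho$ extends to $\mathbb Z$ with initial law $\nu_\rho^*$ by ``solving some topological issues'' and a basic coupling. So there is no proof in the paper to compare your proposal against; Theorem~\ref{thm:mdphl} is used as a black box in the proofs of Theorems~\ref{thm:main_1} and~\ref{thm:main_2}.

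Your outline is a reasonable summary of the Gao--Quastel strategy (exponential Dynkin martingale, Chebyshev for the upper bound, tilting for the lower bound, Cram\'er-type MDP for the initial law), and nothing you wrote is obviously wrong as a sketch. But since the task was to compare with the paper's own proof, the honest answer is that there is nothing to compare: the authors defer entirely to \cite{gao2003moderate}. If you wanted to fill the one genuine gap the paper leaves open, it would be the extension from $\mathbb T_N$ to $\mathbb Z$ and from $\nu_\rho$ to $\nu_\rho^*$ mentioned in Remark~\ref{rmk:initialmeasure}; the coupling part is straightforward (a single discrepancy at the origin cannot affect the MDP at scale $a_N^2/N$), while the topological part---exponential tightness of $\mu^N$ in $D([0,T],\mathcal S'(\mathbb R))$ on the full line---is exactly the ``secondary, but still delicate, issue'' you flagged at the end.
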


	\begin{remark}\label{rmk:initialmeasure}
		Although Gao and Quastel \cite{gao2003moderate} proved moderate deviations for  the symmetric exclusion process on the finite ring $\T_N:= \bb{Z} / N \bb{Z}$ starting from the measure $\nu_\rho$, their results could be extended directly to the process evolving on the infinite integer lattice $\Z$ with initial measure $\nu_\rho^*$ by solving some topological issues and by using a simple coupling argument.
		
		The coupling is defined as follows: for two processes $\eta$ and $\xi$, we let them jump as possible as together. This is called basic coupling in the literature, \emph{cf.}\,\cite{liggettips} for example.
	\end{remark}

	\begin{remark}
		Gao and Quastel \cite{gao2003moderate} considered the symmetric exclusion processes in all dimensions. We only state the result for the one dimensional case since it is sufficient for our purpose. 
	\end{remark}

	\subsection{Moderate deviations for the current and tagged particle.}  In this subsection, we state the main results of the article, i.e., moderate deviation principles for the current and tagged particle positions in the symmetric simple exclusion process. 
	
	Since the number of particles is locally conserved, for $x \in \Z$,
	\begin{equation}\label{current_density}
		\eta_{TN^2} (x) - \eta_{0} (x) = J_{x-1,x} (TN^2) - J_{x,x+1} (TN^2).
	\end{equation}
	Summing over $x$ from $0$ to $+\infty$ and divided by $a_N$ on both hand sides, formally we have
	\begin{equation}\label{cd}
		\frac{1}{a_N}J_{-1,0} (TN^2) = \frac{1}{a_N} \sum_{x \geq 0} \big\{\big(\eta_{tN^2} (x) - \rho\big) - \big(\eta_{0} (x) - \rho \big) \big\}.
	\end{equation}
	Note that the infinite sum  on the right-hand side may \emph{not} be well defined.  By Theorem \ref{thm:mdphl} and the contraction principle, $a_N^{-1} J_{-1,0} (TN^2)$ should satisfy the moderate deviation principle with rate function  $\bb{J} (\alpha)$  given by
	\begin{equation}\label{jalpha}
		\bb{J} (\alpha) = \inf \{ \mathcal{Q} (\mu): \mu_T (\chi_{[0,\infty)}) - \mu_0 (\chi_{[0,\infty)}) = \alpha\},\quad \alpha \in \R.
	\end{equation}
	Similarly, for the position of the tagged particle, observe that if  $J_{-1,0}(TN^2)>0$, then
	\begin{equation}\label{jpositive}
		\frac{1}{a_N}J_{-1,0}(TN^2)=\frac{1}{a_N}  \sum_{x=0}^{X(TN^2)-1}\eta_{TN^2}(x) = \frac{1}{a_N}  \sum_{x=0}^{X(TN^2)-1} \big(\eta_{TN^2}(x) - \rho\big)  + \frac{1}{a_N} \rho  X(TN^2),
	\end{equation}
	and if $J_{-1,0}(TN^2) < 0$ , then
	\begin{equation}\label{jnegative}
		\frac{1}{a_N} J_{-1,0}(TN^2)=- \frac{1}{a_N} \sum_{x=X(TN^2)}^{-1}\eta_{TN^2}(x) = - \frac{1}{a_N} \sum_{x=X(TN^2)}^{-1} \big(\eta_{TN^2}(x) - \rho\big)  + \frac{1}{a_N} \rho   X(TN^2).
	\end{equation}
	By large deviation estimates, the sums on the right-hand sides of \eqref{jpositive} and \eqref{jnegative} are both negligible in the limit. Using the contraction principle again, the rate function for the tagged particle positions should be given by
	\begin{equation}\label{ialpha}
		\bb{I} (\alpha) = \inf \{ \mathcal{Q} (\mu): \mu_T (\chi_{[0,\infty)}) - \mu_0 (\chi_{[0,\infty)}) = \rho \alpha \}, \quad \alpha \in \R.
	\end{equation}
	The first main result of the article validates the above formal arguments.
	
	\begin{theorem}\label{thm:main_1}
		The sequence of the currents $\{ J_{-1,0} (TN^2) / a_N\}_{N \geq 1}$, respectively of the tagged particle positions $\{X(TN^2)/ a_N\}_{N \geq 1}$,  satisfies moderate deviation principles with decay rate $a_N^2 / N$ and with rate function $\bb{J} (\alpha)$, respectively with $\bb{I} (\alpha)$. To be precise, for any open set $O \in \R$,
		\begin{align*}
			\liminf_{N \rightarrow \infty} \frac{N}{a_N^2} \log \P \Big( \frac{1}{a_N} J_{-1,0} (TN^2) \in O \Big) \geq - \inf_{\alpha \in O} \bb{J} (\alpha),\\
			\liminf_{N \rightarrow \infty} \frac{N}{a_N^2}\log \P \Big( \frac{1}{a_N} X(TN^2) \in O \Big) \geq - \inf_{\alpha \in O} \bb{I} (\alpha),
		\end{align*}
		and for any closed set $C \in \R$,
		\begin{align*}
			\limsup_{N \rightarrow \infty} \frac{N}{a_N^2} \log \P \Big( \frac{1}{a_N} J_{-1,0} (TN^2) \in C \Big) \leq - \inf_{\alpha \in C} \bb{J} (\alpha),\\
			\limsup_{N \rightarrow \infty} \frac{N}{a_N^2} \log \P \Big( \frac{1}{a_N} X(TN^2) \in C\Big) \leq - \inf_{\alpha \in C} \bb{I} (\alpha).
		\end{align*}
	\end{theorem}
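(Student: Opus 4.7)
The plan is to apply the contraction principle to Theorem \ref{thm:mdphl} together with an exponential-approximation argument. The obstacle is that the functional $\mu \mapsto \mu_T(\chi_{[0,\infty)}) - \mu_0(\chi_{[0,\infty)})$, which formally equals $a_N^{-1}J_{-1,0}(TN^2)$ by the telescoping identity \eqref{current_density}, is \emph{not} continuous on $D([0,T],\mc{S}^\prime(\R))$ because $\chi_{[0,\infty)}$ is neither smooth nor of compact support. I would first prove the MDP for the current by smooth truncation and then deduce the MDP for the tagged particle from identities \eqref{jpositive}--\eqref{jnegative}.

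For each $M \geq 1$ choose $\phi_M \in C_c^\infty(\R)$ with $0 \leq \phi_M \leq 1$, $\phi_M \equiv 1$ on $[0,M]$ and $\phi_M \equiv 0$ outside $(-1,M+1)$. Then $\Phi_M(\mu) := \mu_T(\phi_M) - \mu_0(\phi_M)$ is continuous on $D([0,T],\mc{S}^\prime(\R))$, so the contraction principle applied to Theorem \ref{thm:mdphl} yields an MDP for
\[Y^M_N := \frac{1}{a_N}\sum_{x \in \Z}\phi_M(x/N)\bigl(\eta_{TN^2}(x) - \eta_0(x)\bigr)\]
at speed $a_N^2/N$ with rate function $\bb{J}_M(\alpha) = \inf\{\mc{Q}(\mu): \Phi_M(\mu) = \alpha\}$. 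The key estimate, which is also the principal technical obstacle, is the superexponential approximation
\[\limsup_{M \to \infty}\limsup_{N \to \infty} \frac{N}{a_N^2}\log \bP\Bigl(\bigl|Y^M_N - a_N^{-1}J_{-1,0}(TN^2)\bigr| > \epsilon\Bigr) = -\infty,\quad \forall \epsilon > 0.\]
Using \eqref{current_density} once more, this reduces (up to negligible smoothing contributions) to controlling the centered tail sums $a_N^{-1}\sum_{|x|>MN}(\eta_{tN^2}(x) - \rho)$ at $t=0$ and $t=T$. The $t=0$ piece is a sum of independent Bernoulli variables under $\nu_\rho^*$ and is handled by a Cramér bound, giving decay $\exp(-cMa_N^2/N)$. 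The $t=T$ piece is treated by the exponential Feynman--Kac/entropy estimates that underlie the proof of Theorem \ref{thm:mdphl} in \cite{gao2003moderate}, applied to the test function $\chi_{[0,\infty)} - \phi_M$ with the necessary care to track the $M$-dependence.

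Once exponential equivalence is in place, a standard result (e.g.\ Theorem 4.2.13 of Dembo--Zeitouni) transfers the MDP from $Y^M_N$ to $a_N^{-1}J_{-1,0}(TN^2)$ with rate function $\widetilde{\bb{J}}(\alpha) = \sup_{\delta > 0}\lim_{M\to\infty}\inf_{|\beta-\alpha|<\delta}\bb{J}_M(\beta)$. It remains to identify $\widetilde{\bb{J}}$ with $\bb{J}$ from \eqref{jalpha}, and this is precisely why Theorem \ref{thm:main_2} is established first in Section \ref{sec:proofmain2}: the explicit minimizer $\mu^*$ realizing $\bb{J}(\alpha)$ has fast spatial decay, so $\Phi_M(\mu^*) \to \alpha$ as $M \to \infty$ and $\bb{J}_M(\Phi_M(\mu^*)) \leq \mc{Q}(\mu^*) = \bb{J}(\alpha)$, yielding $\widetilde{\bb{J}} \leq \bb{J}$; the reverse inequality follows from a diagonal approximation combined with the lower semicontinuity of $\mc{Q}$.

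Finally, for the tagged particle, identities \eqref{jpositive}--\eqref{jnegative} give
\[\frac{\rho}{a_N}X(TN^2) = \frac{1}{a_N}J_{-1,0}(TN^2) \mp R_N, \quad R_N := \frac{1}{a_N}\sum_{x \in I_N}\bigl(\eta_{TN^2}(x) - \rho\bigr),\]
where $I_N$ is an interval of length $|X(TN^2)|$. On the event $|X(TN^2)|/a_N \leq K$ (outside of which the probability is superexponentially small at speed $a_N^2/N$ by the current MDP and a comparison argument), $R_N$ is an average of at most $Ka_N$ centered bounded terms, so a Cramér bound yields $\bP(|R_N|>\epsilon) \leq \exp(-c_\epsilon a_N)$, which is superexponential at speed $a_N^2/N$ because $a_N = o(N)$. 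Hence $\rho X(TN^2)/a_N$ is exponentially equivalent to $a_N^{-1}J_{-1,0}(TN^2)$, and a change of variables produces the claimed rate function $\bb{I}(\alpha) = \bb{J}(\rho\alpha)$ matching \eqref{ialpha}.
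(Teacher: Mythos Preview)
Your overall architecture (smooth truncation, contraction principle, exponential equivalence, then the tagged-particle reduction via \eqref{jpositive}--\eqref{jnegative}) is sound, and the tagged-particle step is essentially what the paper does. But the superexponential approximation for the current, as you have written it, does not work. The ``centered tail sums'' $a_N^{-1}\sum_{|x|>MN}(\eta_{tN^2}(x)-\rho)$ are not even well defined: they are infinite sums of i.i.d.\ mean-zero variables and diverge almost surely, so no Cram\'er bound applies. If instead you keep the difference $\eta_{TN^2}(x)-\eta_0(x)$ together and use \eqref{current_density} correctly, summation by parts shows that $a_N^{-1}J_{-1,0}(TN^2)-Y_N^M$ is, up to a smoothing term near the origin, equal to a weighted average of $\{a_N^{-1}J_{x,x+1}(TN^2): x/N\in(M,M+1)\}$. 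By translation invariance under $\nu_\rho$, this has the \emph{same} law as a weighted average of currents near the origin and hence the same moderate-deviation scale as $a_N^{-1}J_{-1,0}(TN^2)$ itself; sending $M\to\infty$ changes nothing. Concretely, $\limsup_N (N/a_N^2)\log\bP(|Y_N^M-a_N^{-1}J_{-1,0}|>\epsilon)$ is bounded below by a finite negative constant independent of $M$, so the exponential-equivalence hypothesis of Dembo--Zeitouni fails.

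The paper fixes exactly this point by choosing a cutoff $G_n(u)=\chi_{\{u>0\}}(1-u/n)^+$ whose right transition has width $n$ rather than $1$. The resulting error in \eqref{current_empiricalM} is $\frac{1}{nNa_N}\sum_{x=0}^{nN-1}J_{x,x+1}(TN^2)$, an average over $nN$ currents; Lemma~\ref{lem:superexponential} controls it via the exponential martingale for the currents together with a Feynman--Kac/Dirichlet-form bound, and the extra $1/n$ factor is precisely what drives the rate to $-\infty$ as $n\to\infty$. Your scheme can be repaired by letting the transition width of $\phi_M$ grow (e.g.\ supported on $(-1,2M)$ with a ramp on $[M,2M]$), but then the heart of the proof becomes that averaged-current estimate, not a density-tail Cram\'er bound. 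Note also that the left ``smoothing contribution'' near the origin is not automatically negligible for the same reason; the paper avoids it by taking $G_n$ with a jump at $0$ so that the identity \eqref{current_empiricalM} produces $J_{-1,0}$ exactly, with no left-boundary error.
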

	
	\medspace

	Next, we state an explicit formula for the rate functions $\bb{J} (\alpha)$ and $\bb{I} (\alpha)$. It was proved in \cite{arratia1983motion,DeMasiFerrari02} the following central limit theorems for the current and tagged particle, 
	\[\lim_{t \rightarrow \infty} \frac{J_{-1,0} (t)}{t^{1/4}} = \mathscr{N} (0,\sigma_J^2), \quad 
	\lim_{t \rightarrow \infty} \frac{X (t)}{t^{1/4}} = \mathscr{N} (0,\sigma_X^2)\]
	in distribution, where $\mathscr{N}  (0,\sigma^2)$ is the normal distribution with mean zero and variance $\sigma^2$, and
	\[\sigma_X^2 = \sqrt{\frac{2}{\pi}} \frac{1-\rho}{\rho}, \quad \sigma_J^2 = \sqrt{\frac{2}{\pi}} \rho (1-\rho).\]
	We also remark that the corresponding invariance principle was proved in \cite{peligrad2008fractional} with respect to the  fractional Brownian motion with parameter $1/4$. Then, a heuristic argument shows that the rate functions for the current and tagged particle positions should be $\alpha^2 / (2 \sigma_J^2 \sqrt{T})$ and $\alpha^2 / (2 \sigma_X^2 \sqrt{T})$. Indeed, for any $\alpha \in \R$, formally,
	\[	\bb{P} \Big( \frac{1}{a_N} J_{-1,0} (TN^2)  = \alpha \Big) = \bb{P} \Big( \frac{J_{-1,0} (TN^2)}{N^{1/2}T^{1/4}}   = \frac{\alpha a_N}{N^{1/2}T^{1/4}} \Big) \approx \frac{1}{\sqrt{2 \pi \sigma_J^2}} \exp \Big\{ - \frac{1}{2 \sigma_J^2} \frac{\alpha^2 a_N^2}{N \sqrt{T}}\Big\}.\]
	Therefore,
	\[\frac{N}{a_N^2} \log \bb{P} \Big( \frac{1}{a_N} J_{-1,0} (TN^2) = \alpha \Big) \approx - \frac{\alpha^2}{2 \sigma_J^2 \sqrt{T}}. \]
	The same argument is also true for the tagged particle positions. The following result verifies that this is indeed the case, which is the second main result of the article.
	
	\begin{theorem}\label{thm:main_2}
		For the rate functions, we have
		\[\bb{J} (\alpha) = \frac{\sqrt{2\pi} \alpha^2}{4 \rho (1-\rho)\sqrt{T}}, \quad
		\bb{I} (\alpha) = \frac{\sqrt{2\pi} \rho \alpha^2}{4 (1-\rho)\sqrt{T}}.\]
	\end{theorem}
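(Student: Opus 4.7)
The plan is to reduce the minimisation of $\mathcal{Q}$ subject to a linear constraint to a Cauchy--Schwarz inequality in an appropriate Hilbert space. Since the constraint defining $\bb{I}(\alpha)$ is identical to that of $\bb{J}(\rho\alpha)$, it suffices to treat $\bb{J}(\alpha)$. First I would rewrite $\mathcal{Q}(\mu)$ in a \emph{density--flux} form. By Legendre duality, $\mathcal{Q}_0(\mu_0)<\infty$ forces $\mu_0$ to have a density $\phi_0 \in L^2(\R)$ with $\mathcal{Q}_0(\mu_0) = \tfrac{1}{2\rho(1-\rho)}\|\phi_0\|_{L^2}^2$, and analogously $\mathcal{Q}_{dyn}(\mu)<\infty$ yields a representation $\partial_t \phi = \tfrac12\partial_u^2\phi - \partial_u j$ (distributionally) with some $j \in L^2([0,T]\times\R)$, giving
\[
\mathcal{Q}(\mu) \;=\; \frac{1}{2\rho(1-\rho)}\bigl(\|\phi_0\|_{L^2(\R)}^2 + \|j\|_{L^2([0,T]\times\R)}^2\bigr),
\]
the minimum being attained at the unique $(\phi_0,j)$ of minimal norm compatible with $\mu$.

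Next I would linearise the constraint. The natural test function is $G(s,u) := P_{T-s}\chi_{[0,\infty)}(u) = \Phi(u/\sqrt{T-s})$, which solves the backward heat equation $\partial_s G + \tfrac12 \partial_u^2 G = 0$ with terminal data $\chi_{[0,\infty)}$. Plugging it into the identity $\langle \mu_T, G_T\rangle - \langle\mu_0, G_0\rangle = \int_0^T\int j_s\,\partial_u G_s\,du\,ds$ gives
\[
\mu_T(\chi_{[0,\infty)}) - \mu_0(\chi_{[0,\infty)})
\;=\; \int_\R g(v)\,\phi_0(v)\,dv \;+\; \int_0^T\!\!\int_\R p_{T-s}(v)\,j_s(v)\,dv\,ds,
\]
where $g(v) := \Phi(v/\sqrt{T}) - \chi_{[0,\infty)}(v)$ and $p_t(v) := (2\pi t)^{-1/2} e^{-v^2/(2t)}$, both of which lie in $L^2$ of the corresponding space.

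Once the constraint is in this shape, Cauchy--Schwarz in the product Hilbert space $L^2(\R)\oplus L^2([0,T]\times\R)$ yields
\[
\alpha^2 \;\le\; \bigl(\|g\|^2 + \|p\|^2\bigr)\bigl(\|\phi_0\|^2 + \|j\|^2\bigr),
\]
with equality precisely when $(\phi_0,j) = c\,(g,\,p_{T-\cdot})$ for a single $c\in\R$. This pair is admissible, so $\bb{J}(\alpha) = \alpha^2/\bigl(2\rho(1-\rho)(\|g\|^2+\|p\|^2)\bigr)$. Explicitly, $\|p\|^2 = \int_0^T(2\sqrt{\pi(T-s)})^{-1}\,ds = \sqrt{T/\pi}$, and by Fubini
\[
\|g\|^2 = 2\sqrt{T}\int_0^\infty (1-\Phi(w))^2\,dw = 2\sqrt{T}\iint_{[0,\infty)^2}\min(s,u)\,\varphi(s)\varphi(u)\,ds\,du,
\]
with $\varphi$ the standard Gaussian density; evaluating the double integral gives $\|g\|^2 = (\sqrt{2}-1)\sqrt{T/\pi}$. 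Summing yields $\|g\|^2+\|p\|^2 = \sqrt{2T/\pi}$, which delivers $\bb{J}(\alpha) = \sqrt{2\pi}\,\alpha^2/(4\rho(1-\rho)\sqrt{T})$. The formula for $\bb{I}(\alpha)$ is then immediate via $\alpha \mapsto \rho\alpha$.

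The main obstacle is making rigorous the two preparatory moves above: the test function $G(s,u) = \Phi(u/\sqrt{T-s})$ does not belong to $C^{1,2}_c([0,T]\times\R)$, and $\chi_{[0,\infty)}$ has neither compact support nor smoothness, so one cannot simply plug them into the supremum defining $\mathcal{Q}_{dyn}$ and into the constraint. One has to cut off $\chi_{[0,\infty)}$ on a large interval and mollify, verify that the resulting sequence of admissible test functions reproduces $G$ in the appropriate $L^2$ sense, and pass to the limit. The fact that both $g$ and $p_{T-\cdot}$ are square integrable, combined with Cauchy--Schwarz, is what will let us control the approximation error uniformly over $(\phi_0,j)$ with a bounded rate function.
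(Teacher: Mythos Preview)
Your approach is correct and takes a genuinely different route from the paper's. The paper passes from $(\mu_0,H)$ to the potential $K(t,u)=\int_0^t J(s,u)\,ds$, expresses $\rho(1-\rho)\mathcal{Q}(\mu)$ as a functional of $(K,\mu_0)$, invokes \cite[Proposition~4.4]{sethuraman2013large} to handle the $K$-part, and then builds a minimizer for the full functional by a time-reflection trick: it sets $\widetilde{\mu}_{\alpha,T}(t,\cdot)=\pm\mu_{\alpha/2,T/2}(|t-T/2|,\cdot)$ with $\mu_{\alpha/2,T/2}$ the Sethuraman--Varadhan optimizer started from zero, and checks minimality by expanding the quadratic and showing all cross terms vanish. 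Your argument stays in the $(\phi_0,j)$ coordinates where $\mathcal{Q}$ is already a pure sum of squares, linearises the constraint by testing against the backward heat evolution of $\chi_{[0,\infty)}$, and reads off the value from Cauchy--Schwarz together with the explicit Gaussian integrals. This is shorter, avoids the external citation, and delivers the minimizer in closed form $(\phi_0,j_s)\propto(g,p_{T-s})$, which is actually more explicit than the paper's $\widetilde{\mu}_{\alpha,T}$. What the paper's route buys is that it works from the outset in the dense class of smooth compactly supported data, where the cut-off $G_n(u)=\chi_{\{u>0\}}(1-u/n)^+$ replaces $\chi_{[0,\infty)}$; this approximation is needed anyway in the proof of Theorem~\ref{thm:main_1}, so the regularity issues you flag never arise separately. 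In your version that justification is an extra (but routine) step, since $g$ and $p_{T-\cdot}$ are both in $L^2$.
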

	
	\medspace
	
	\section{Proof of Theorem \ref{thm:main_2}}\label{sec:proofmain2}
	
	In this section, we prove Theorem \ref{thm:main_2}. Comparing \eqref{ialpha} with  \eqref{jalpha}, we have $\bb{I} (\alpha) = \bb{J} (\rho \alpha)$. Therefore, we only need to prove Theorem \ref{thm:main_2} for the rate function $\bb{J} (\alpha)$.
	
	\begin{proof}[Proof of Theorem \ref{thm:main_2}]
		By the definition of $\bb{J} (\alpha)$ in \eqref{jalpha}, we only need to consider the family of  tempered distributions $\mu$ such that $\mathcal{Q} (\mu) < \infty$.  By Riesz's representation theorem,  if $\mathcal{Q}_0 (\mu_0) < \infty$, then there exists $\psi \in L^2 (\R)$ such that
		\begin{equation}\label{mu0}
			\<\mu_0,\phi\> = \int_{\R} \psi(u) \phi (u) du, \quad \mathcal{Q}_0 (\mu_0) = \frac{||\psi||_{L^2 (\R)}^2}{2 \rho (1-\rho).}
		\end{equation}
		For $H,G \in C^{1,2}_c ([0,T] \times \R)$, define
		\[[H,G] = \int_{0}^T \int_{\R} \partial_{u} H(t,u) \partial_{u}G(t,u)\,du\,dt.\]
		We say $H \sim G$ if $[H-G,H-G] = 0$. Let $\mathcal{H}^1$ be the Hilbert space obtained as the completion of  $C^{1,2}_c ([0,T] \times \R)) / \sim$ with respect to the inner product $[\cdot,\cdot]$. By \cite[Lemma 5.1]{gao2003moderate} and Eq.\,\eqref{mu0}, if $\mathcal{Q} (\mu) < \infty$, then  there exist some $H \in \mathcal{H}^1$ and $\psi \in L^2 (\R)$ such that $\mu$ is the unique solution of 
		\begin{equation}\label{mu}
			\begin{cases}
				\partial_t \mu (t,u) = (1/2) \partial_u^2 \mu (t,u) - \rho (1-\rho)  \partial_u^2 H (t,u)\\
				\mu_0 (u) = \psi (u).
			\end{cases}
		\end{equation}
		Moreover, 
		\begin{equation}\label{q_dyn}
			\mathcal{Q}_{dyn}(\mu) = \frac{\rho(1-\rho)}{2} \int_0^T \int_{\R} (\partial_u H )^2 (t,u) \,du\, dt.
		\end{equation}
		To sum up, for any $\mu$ such that $\mathcal{Q} (\mu) < \infty$, we could find $H_n \in C^{1,2}_c ([0,T] \times \R))$ and $\psi_n \in C^2_c (\R)$ such that the corresponding $\mu_n$ satisfies $\mathcal{Q} (\mu_n) \rightarrow \mathcal{Q} (\mu)$ as $n \rightarrow \infty$.  Denote 
		\[\mathcal{A} = \big\{ \mu: \text{there exist}\;H \in C^{1,2}_c ([0,T] \times \R)) \;\text{and}\; \psi \in C^2_c (\R) \;\text{such that $\mu$ satisfies Eq.\,\eqref{mu}} \big\}.\]
		Then 
		\begin{equation}\label{jalpha1}
			\bb{J} (\alpha) = \inf \{ \mathcal{Q} (\mu): \mu \in \mathcal{A},\;\mu_T (\chi_{[0,\infty)}) - \mu_0 (\chi_{[0,\infty)}) = \alpha\},\quad \alpha \in \R.
		\end{equation}
		
		\medspace
		
		Now take $\mu \in \mathcal{A}$ such that $\mu_T (\chi_{[0,\infty)}) - \mu_0 (\chi_{[0,\infty)}) = \alpha$. Note that in this case $\mu$ actually satisfies $\mu \in C_c^{1,2} ([0,T] \times \R)$.  Denote
		\[J = -\frac{1}{2} \partial_u \mu + \rho (1-\rho) \partial_u H.\] 
		Then by \eqref{mu},
		\[\partial_t \mu + \partial_u J =0.\]
		Replacing $\partial_{u} H$ in \eqref{q_dyn} with $[\rho(1-\rho)]^{-1} (J + (1/2) \partial_{u} \mu)$, we may rewrite $\mathcal{Q}_{dyn}$ as
		\begin{equation}\label{qdyn}
			\mathcal{Q}_{dyn}(\mu) = \frac{1}{\rho(1-\rho)} \int_0^T \int_{\R} \Big\{\frac{1}{2} J^2 (t,u) + \frac{1}{8}( \partial_u \mu)^2 (t,u) + \frac{1}{2} J (t,u) \partial_u \mu (t,u)\Big\} \,du \,dt.
		\end{equation}
		Using the integration by parts formula,
		\begin{multline*}
			\int_0^T \int_{\R}\frac{1}{2} J (t,u) \partial_u \mu (t,u)  \,du \,dt = - \int_0^T \int_{\R}\frac{1}{2}  \mu (t,u) \partial_u J (t,u) \,du \,dt  \\
			= \int_0^T \int_{\R}\frac{1}{2}  \mu (t,u) \partial_t \mu (t,u) \,du \,dt = \frac{1}{4} \int_{\R} \Big\{\mu^2 (T,u) - \mu_0^2 (u)\Big\} du. 
		\end{multline*}
		Therefore,
		\[ \rho (1-\rho) \mathcal{Q}_{dyn}(\mu)= \int_0^T \int_{\R} \Big\{\frac{1}{2} J^2 (t,u) + \frac{1}{8}( \partial_u \mu)^2 (t,u) \Big\} \,du \,dt + \frac{1}{4} \int_{\R} \Big\{\mu^2 (T,u) - \mu^2_0 (u)\Big\} du. \]
		By Eq.\,\eqref{mu0},
		\[\mathcal{Q}_0 (\mu_0) = \frac{1}{2 \rho (1-\rho)} \int_{\R} \mu_0^2 (u) \,du. \]
		Therefore, 
		\begin{equation}\label{q}
			\rho (1-\rho) \mathcal{Q} (\mu)= \int_0^T \int_{\R} \Big\{\frac{1}{2} J^2 (t,u) + \frac{1}{8}( \partial_u \mu)^2 (t,u) \Big\} \,du \,dt + \frac{1}{4} \int_{\R} \Big\{\mu^2 (T,u) + \mu_0^2 (u)\Big\} du.
		\end{equation}
		
		\medspace
		
		Define $K: \R_+ \times \R \rightarrow \R$ as
		\[K (t,u) = \int_0^t J (s,u) ds.\]
		Direct calculations yield that
		\begin{align}
			&\partial_t K (t,u) = J(t,u),\notag\\
			&\partial_u K (t,u) = \int_{0}^t \partial_{u} J(s,u) du = - \int_0^t \partial_s \mu(s,u) ds = \mu_0 (u) - \mu (t,u),\label{eqn:kmu}\\
			& \partial_u^2 K (t,u) = \partial_u \mu_0 (u) -  \partial_u \mu (t,u).\notag
		\end{align}
		Replacing $J(t,u), \partial_{u} \mu(t,u)$ and $\mu(T,u)$ in Eq.\,\eqref{q} by functional of $K(t,u)$ and $\mu_0 (u)$, we may rewrite the rate function $\mathcal{Q} (\mu)$ as 
		\begin{multline}\label{qmu1}
			\rho (1-\rho) \mathcal{Q} (\mu) = \int_0^T \int_{\R} \Big\{\frac{1}{2} (\partial_t K)^2 (t,u) + \frac{1}{8}( \partial_{u}^2 K)^2 (t,u) \Big\} \,du \,dt + \int_{\R} \frac{1}{4} (\partial_{u} K)^2 (T,u)\,du\\
			+ \int_0^T \int_{\R} \Big\{\frac{1}{8} (\partial_u \mu_0)^2 (u) - \frac{1}{4} \partial_u \mu_0 (u) \partial_{u}^2 K (t,u) \Big\} \,du \,dt + \int_{\R} \frac{1}{2} \mu_0^2 (u) - \frac{1}{2} \mu_0 (u) \partial_u K (T,u)\,du.
		\end{multline}
		Moreover, the constraint $\mu_T (\chi_{[0,\infty)}) - \mu_0 (\chi_{[0,\infty)}) = \alpha$ reduces to that
		\[K(T,0) = \int_0^T J(s,0) ds = -  \int_0^T \int_0^\infty \partial_u J(s,u) du ds = \int_0^\infty \mu(T,u) - \mu_0(u) du = \alpha.\]
		Obviously, $K(0,u) = 0$ for all $u \in \R$.  Observe that the expression of Eq.\,\eqref{qmu1} is similar to that of $\mathcal{K}$ in \cite[Page 1493]{sethuraman2013large}, except that we need to consider the effect of the initial conditions. To obtain the infimum of \eqref{qmu1} over the above constraints, we first state a lemma without proof.
		
		\begin{lemma}[{\cite[Proposition 4.4]{sethuraman2013large}}]\label{lem:sv}
			For $M \in C^{1,2} ([0,1] \times \R)$, define
			\[ \mathcal{M}= \frac{1}{4} \int\left|\partial_{u} M(1, u)\right|^{2} d u+\frac{1}{2} \int_{0}^{1} \int\left|\partial_t M (t, u)\right|^{2} d u d t 
			+\frac{1}{8} \int_{0}^{1} \int\left|\partial_{u}^2 M (t, u)\right|^{2} d u d t.\]
			Then 
			\[\inf \{ \mathcal{M}: M(0,u) \equiv 0, M(1,0)=1 \}= \frac{\sqrt{\pi}}{2}.\]
			Moreover, the infimum is obtained  in $M$ such that
			\[\frac{1}{4} \partial_u^4 M (t,u) = \partial_t^2 M (t,u), \quad \partial_u^2 M (1,u) = 0, \quad \partial_t M (1,u) = 0.\]
		\end{lemma}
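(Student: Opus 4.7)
The plan is to decouple the variational problem via a spatial Fourier transform, reduce to an elementary one-dimensional optimization for each mode, and then close up with a Cauchy-Schwarz inequality applied to the single linking constraint. Setting $f_k(t)=\int_{\R}e^{-iku}M(t,u)\,du$ and using Plancherel's identity, the functional rewrites as
\[
\mathcal{M}=\frac{1}{2\pi}\int_{\R}\Big\{\frac{k^2}{4}|f_k(1)|^2+\int_0^1\Big(\frac{1}{2}|f_k'(t)|^2+\frac{k^4}{8}|f_k(t)|^2\Big)dt\Big\}dk,
\]
the initial condition $M(0,\cdot)\equiv 0$ becomes $f_k(0)=0$ for every $k$, and the point constraint $M(1,0)=1$ becomes the single scalar condition $(2\pi)^{-1}\int_{\R}f_k(1)\,dk=1$. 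The Fourier modes are now independent except through this one integral constraint.

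First I would perform the modewise minimization. For fixed $k\neq 0$ and prescribed terminal value $f_k(1)=c$, the curve minimizing $\int_0^1(\tfrac12|f_k'|^2+\tfrac{k^4}{8}|f_k|^2)\,dt$ among paths starting at $0$ solves the Euler-Lagrange ODE $f_k''=\tfrac{k^4}{4}f_k$, giving $f_k(t)=c\sinh(k^2t/2)/\sinh(k^2/2)$. A short computation based on $\cosh^2+\sinh^2=\cosh(2\,\cdot\,)$ and adding the boundary term $\tfrac{k^2}{4}|c|^2$ then yields the modewise energy
\[
E_k(c)=\frac{k^2|c|^2}{4}\bigl[\coth(k^2/2)+1\bigr]=\frac{k^2|c|^2}{2(1-e^{-k^2})}.
\]
Consequently, for any admissible $M$, $\mathcal{M}\ge\frac{1}{4\pi}\int_{\R}k^2|f_k(1)|^2/(1-e^{-k^2})\,dk$.

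Next I would obtain the sharp lower bound by applying Cauchy-Schwarz to the constraint, splitting the integrand as $f_k(1)=\bigl(kf_k(1)/\sqrt{1-e^{-k^2}}\bigr)\cdot\bigl(\sqrt{1-e^{-k^2}}/k\bigr)$:
\[
(2\pi)^2=\Big(\int_{\R}f_k(1)\,dk\Big)^2\le\Big(\int_{\R}\frac{k^2|f_k(1)|^2}{1-e^{-k^2}}\,dk\Big)\cdot\Big(\int_{\R}\frac{1-e^{-k^2}}{k^2}\,dk\Big).
\]
The remaining integral is elementary: using $(1-e^{-k^2})/k^2=\int_0^1 e^{-sk^2}ds$ and Fubini, $\int_{\R}(1-e^{-k^2})/k^2\,dk=\int_0^1\sqrt{\pi/s}\,ds=2\sqrt{\pi}$. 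Combining with the previous modewise bound gives $(2\pi)^2\le 4\pi\mathcal{M}\cdot 2\sqrt{\pi}$, i.e.\ $\mathcal{M}\ge\sqrt{\pi}/2$. Equality in Cauchy-Schwarz forces the terminal profile $f_k(1)=\sqrt{\pi}(1-e^{-k^2})/k^2$; together with the modewise minimizing trajectories this determines $M$ completely, and inverting the Fourier transform produces an explicit real-space minimizer that by construction satisfies the Euler-Lagrange equation $\tfrac14\partial_u^4M=\partial_t^2M$. The stated conditions $\partial_t M(1,u)=0$ and $\partial_u^2M(1,u)=0$ should then emerge from the natural boundary condition of the variation at $t=1$, valid away from the constraint point $u=0$.

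The main obstacle is the singular nature of the point constraint $M(1,0)=1$: in the Lagrangian formulation it injects a delta-function source at $(t,u)=(1,0)$, so the minimizer produced by the Fourier analysis above is not classically $C^{1,2}$ at the constraint point, but rather a distributional object with a delta-type singularity in $\partial_u^2 M(1,\cdot)$. Justifying that the infimum over the smooth class $C^{1,2}([0,1]\times\R)$ equals the value $\sqrt{\pi}/2$ therefore requires a regularization argument: replace $M(1,0)=1$ by $\int_{\R}M(1,u)g_\delta(u)\,du=1$ for a smooth approximate identity $g_\delta\to\delta_0$, solve the regularized problem using the same Fourier/Cauchy-Schwarz scheme (where the minimizer now lives in $C^{\infty}$), and pass to the limit $\delta\to 0$ to verify that the infima converge to $\sqrt{\pi}/2$. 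A density argument then upgrades the lower bound to the stated smooth class, completing the proof.
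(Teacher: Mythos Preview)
The paper does not prove this lemma; it is quoted from \cite[Proposition 4.4]{sethuraman2013large} with the explicit remark ``we first state a lemma without proof.'' Your Fourier-transform approach is correct and gives a clean, self-contained derivation: the modewise reduction to the ODE $f_k''=(k^4/4)f_k$, the explicit terminal-value energy $E_k(c)=k^2|c|^2/\bigl(2(1-e^{-k^2})\bigr)$, and the Cauchy--Schwarz step using $\int_{\R}(1-e^{-k^2})/k^2\,dk=2\sqrt{\pi}$ all check out and combine to give $\mathcal{M}\ge\sqrt{\pi}/2$ with the stated equality profile. The argument in the cited reference proceeds instead by deriving the Euler--Lagrange system directly in real space---producing the biharmonic-type equation $\tfrac14\partial_u^4 M=\partial_t^2 M$ together with the natural boundary conditions recorded in the lemma---and then evaluating the optimal value; your route bypasses that PDE analysis at the price of an inverse Fourier transform to recover the minimizer in real space.

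You correctly flag the one genuine subtlety: the optimal profile carries a singularity at the constraint point $(1,0)$ (the Lagrange multiplier for $M(1,0)=1$ is a Dirac mass, so $\partial_u^2 M(1,\cdot)$ has a jump there), hence the infimum over the stated class $C^{1,2}([0,1]\times\R)$ is approached but not attained, and the Euler--Lagrange conditions in the last line of the lemma hold only away from $u=0$. Your proposed mollification of the point constraint is the standard remedy and suffices to close the argument; the paper's later use of the lemma (constructing $K_{\alpha,T}$ and $\widetilde{K}_{\alpha,T}$) only needs the optimal value and the listed identities, which your analysis delivers.
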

		
		For $K \in C^{1,2} ([0,T] \times \R)$, denote by $F_T (K)$ the first line on the right-hand side of \eqref{qmu1}, \emph{i.e.}
		\[F_{T} (K) = \int_0^T \int_{\R} \Big\{\frac{1}{2} (\partial_t K)^2 (t,u) + \frac{1}{8}( \partial_{u}^2 K)^2 (t,u) \Big\} \,du \,dt + \int_{\R} \frac{1}{4} (\partial_{u} K)^2 (T,u)\,du.\]
		Let $M (t,u) = \alpha^{-1} K(tT,u\sqrt{T})$. Then we could rewrite $F_T (K)$ as
		\[F_{T} (K) = \frac{\alpha^2}{\sqrt{T}} \mathcal{M},\]
		where $\mathcal{M}$ is given in Lemma \ref{lem:sv}. Moreover, the constraints on $K$ reduces to that $M(0,u) \equiv 0$ and that $M(1,0)=1$.  Therefore, using Lemma \ref{lem:sv},
		\[\inf\{F_{T} (K): K (T,0) = \alpha, \;K(0,u) = 0\} = \frac{\sqrt{\pi} \alpha^2}{2 \sqrt{T}},\]
		and the infimum is attained at the point $K_{\alpha,T}$  such that
		\begin{equation}\label{kalphat}
			\frac{1}{4} \partial_u^4 K_{\alpha,T} (t,u) = \partial_t^2 K_{\alpha,T} (t,u), \quad \partial_{u}^2 K_{\alpha,T} (T,u) \equiv 0,\quad \partial_t K_{\alpha,T} (T,u) \equiv 0.
		\end{equation}

		\medspace
		
		Next, we shall construct the point $(\widetilde{K}_{\alpha,T},\widetilde{\mu}_{\alpha,T} (0,\cdot))$ at which  the infimum of \eqref{qmu1} is attained from the above observations. 
		Denote by $\mu_{\alpha,T}$ the corresponding density associated to $K_{\alpha,T}$ with initial condition $\mu_{\alpha,T} (0,u) = 0$.  More precisely, by \eqref{eqn:kmu}, $\mu_{\alpha,T} (t,u) = - \partial_{u} K_{\alpha,T} (t,u)$. Note that the parameter $\alpha$ could be interpreted as the flux from the left of the origin to the right up to time $T$. The main idea is to construct $\widetilde{\mu}_{\alpha,T}$ associated to $\widetilde{K}_{\alpha,T}$ in such a way that the flux  from the left of the origin to the right is $\alpha/2$ during the time intervals $[0,T/2]$ and $[T/2,T]$, and that $\widetilde{\mu}_{\alpha,T} (T/2,u) = 0$ for all $u$.  
		
		To this aim, define
		\begin{equation}\label{mu_tilde}\widetilde{\mu}_{\alpha,T} (t,u) = \begin{cases}
				-\mu_{\alpha/2,T/2} \big(\tfrac{T}{2}-t,u\big)\quad &\text{if}\quad 0 \leq t \leq T/2,\\
				\mu_{\alpha/2,T/2} \big(t -\tfrac{T}{2},u\big)\quad &\text{if}\quad T/2 \leq t \leq T.
		\end{cases}\end{equation}
		Then $\widetilde{\mu}_{\alpha,T} (0,u) = - \mu_{\alpha/2,T/2} \big(\tfrac{T}{2},u\big)$.  By \eqref{eqn:kmu}, the corresponding $\widetilde{K}_{\alpha,T}$ is given by
		\begin{equation}\label{ktilde}
			\widetilde{K}_{\alpha,T} (t,u) = \begin{cases}
				K_{\alpha/2,T/2} \big(\tfrac{T}{2},u\big)- K_{\alpha/2,T/2} \big(\tfrac{T}{2}-t,u\big)\quad &\text{if}\quad 0 \leq t \leq T/2,\\
				K_{\alpha/2,T/2} \big(\tfrac{T}{2},u\big)+ K_{\alpha/2,T/2} \big(t-\tfrac{T}{2},u\big)\quad &\text{if}\quad T/2 \leq t \leq T.
			\end{cases}
		\end{equation}
		Note also that 
		\[\widetilde{\mu}_{\alpha,T} (0,u) = \partial_u K_{\alpha/2,T/2} \big(\tfrac{T}{2},u\big).\]
		We claim that the infimum of  $\rho (1-\rho) \mathcal{Q} (\mu)$ as in \eqref{qmu1} is attained at the point $(\widetilde{K}_{\alpha,T},\widetilde{\mu}_{\alpha,T} (0,\cdot))$.   To simplify notations, we write $K = K_{\alpha/2,T/2}$ when there is no ambiguity. A direct calculation shows that the infimum  is given by
		\begin{align}
			\int_0^{T/2} \int_\R &\Big\{ \frac{1}{2} \Big(\partial_t K (\tfrac{T}{2} -t,u) \Big)^2 + \frac{1}{8} \Big(\partial_u^2 K (\tfrac{T}{2} ,u) -  \partial_u^2 K (\tfrac{T}{2} - t ,u)\Big)^2 +  \frac{1}{8} \Big(\partial_u^2 K (\tfrac{T}{2} ,u) \Big)^2 \notag\\
			&\qquad -  \frac{1}{4} \partial_u^2 K (\tfrac{T}{2} ,u) \Big(\partial_u^2 K (\tfrac{T}{2} ,u) - \partial_u^2 K (\tfrac{T}{2} -t,u)\Big) \Big\}du\,dt\label{inf1}\\
			+ 	\int_{T/2}^{T} \int_\R &\Big\{ \frac{1}{2} \Big(\partial_t K (t- \tfrac{T}{2} ,u) \Big)^2 + \frac{1}{8} \Big(\partial_u^2 K (\tfrac{T}{2} ,u) +  \partial_u^2 K (t-\tfrac{T}{2} ,u)\Big)^2 +  \frac{1}{8} \Big(\partial_u^2 K (\tfrac{T}{2} ,u) \Big)^2 \notag\\
			&\qquad -  \frac{1}{4} \partial_u^2 K (\tfrac{T}{2} ,u) \Big(\partial_u^2 K (\tfrac{T}{2} ,u) + \partial_u^2 K (t-\tfrac{T}{2} ,u)\Big) \Big\}du\,dt\label{inf2}\\
			+ \int_{\R} \frac{1}{4} &\Big(2 \partial_u K (\tfrac{T}{2},u) \Big)^2 + \frac{1}{2} \Big(\partial_u K (\tfrac{T}{2},u) \Big)^2 - \frac{1}{2} \partial_u K (\tfrac{T}{2},u)  \Big(2 \partial_u K (\tfrac{T}{2},u) \Big) du.\label{inf3}
		\end{align}
		The sum of \eqref{inf1} and \eqref{inf2} is
		\[2 \int_{0}^{T/2} \int_{\R} \frac{1}{2} \Big(\partial_t K (t,u) \Big)^2 + \frac{1}{8} \Big(\partial_u^2 K (t,u) \Big)^2\,du\,dt,\]
		and \eqref{inf3} equals
		\[2 \int_{\R} \frac{1}{4} \Big( \partial_u K (\tfrac{T}{2},u) \Big)^2 \,du.  \]
		Therefore,
		\[\bb{J} (\alpha) = \frac{2}{\rho(1-\rho)} F_{T/2} \big(K_{\alpha/2,T/2}\big) = \frac{\sqrt{2\pi} \alpha^2}{4 \rho (1-\rho)\sqrt{T}}.\]
		
		\medspace
		
		It remains to prove the claim. It is easy to see that $\widetilde{K}_{\alpha,T}$ satisfies the constraints 
		\[\widetilde{K}_{\alpha,T} (0,u) \equiv 0, \quad \widetilde{K}_{\alpha,T} (T,0) = \alpha.\]
		Denote by $G(K,\mu_0)$ the formula on the right-hand side of \eqref{qmu1}. Then we only need to prove that for any $K \in C^{1,2} ([0,T] \times \R)$ such that $K(T,0) = 0$ and $K(0,u) \equiv 0$, and for any $\mu_0 \in C_c^2 (\R)$, we have
		\[G(K+\widetilde{K}_{\alpha,T},\mu_0+\widetilde{\mu}_{\alpha,T} (0,\cdot))   \geq G(\widetilde{K}_{\alpha,T},\widetilde{\mu}_{\alpha,T} (0,\cdot)).\]
		To make notations short, below we write $\widetilde{K} = \widetilde{K}_{\alpha,T}$ and $\widetilde{\mu}_{0} (\cdot) = \widetilde{\mu}_{\alpha,T} (0,\cdot)$. Direct calculations show that
		\begin{align}
			G&(K+\widetilde{K},\mu_0+\widetilde{\mu}_{0} )   - G(\widetilde{K},\widetilde{\mu}_0)\notag\\
			&\geq \int_{0}^T \int_{\R} \partial_t K (t,u) \partial_t \widetilde{K} (t,u) + \frac{1}{8} \big(\partial_u^2 K (t,u)\big)^2 +  \frac{1}{4} \partial_u^2 \widetilde{K} (t,u) \partial_u^2 K (t,u) \,du\, dt\label{d1}\\
			&+ \int_{\R} \frac{1}{4} \big(\partial_u K (T,u)\big)^2 + \frac{1}{2}  \partial_u \widetilde{K} (T,u) \partial_u K (T,u)\,du\label{d2}\\
			&+ \int_{0}^T \int_{\R} \Big\{ \frac{1}{8}  \big(\partial_u \mu_0(u)\big)^2 + \frac{1}{4} \partial_u \widetilde{\mu}_0(u) \partial_u \mu_0(u) \notag\\
			&\qquad - \frac{1}{4} \partial_u \mu_0(u) \partial_u^2 K (t,u) 
			- \frac{1}{4} \partial_u \mu_0(u) \partial_u^2 \widetilde{K} (t,u) - \frac{1}{4} \partial_u \widetilde{\mu}_0(u) \partial_u^2 K (t,u) \Big\} \,du\, dt\label{d3}\\
			&+ \int_{\R} \frac{1}{2} \mu_0(u)^2 +\widetilde{\mu}_0(u)   \mu_0 (u) - \frac{1}{2} \mu_0 (u) \partial_u K (T,u) -  \frac{1}{2} \widetilde{\mu}_0 (u) \partial_u K (T,u) -  \frac{1}{2} \mu_0 (u) \partial_u \widetilde{K} (T,u) du.\label{d4}
		\end{align}
		
		Next we shall deal with the above four terms respectively. To treat the term \eqref{d1}, using the integration by parts formula, we have
		\begin{multline*}
			\int_{0}^T \int_{\R} \partial_t K (t,u) \partial_t \widetilde{K} (t,u) + \frac{1}{4} \partial_u^2 \widetilde{K} (t,u) \partial_u^2 K (t,u) \,du\, dt \\
			= 	\int_{0}^T \int_{\R}  \Big( - \partial_t^2 \widetilde{K} (t,u) + \frac{1}{4} \partial_u^4 \widetilde{K} (t,u) \Big) K (t,u)  \,du\,dt + \int_{\R} K (T,u) \partial_t \widetilde{K} (T,u) du.
		\end{multline*}
		Recall the definition of $\widetilde{K}$ in \eqref{ktilde}. By \eqref{kalphat},
		\begin{equation}\label{p1}
			\partial_t^2 \widetilde{K} (t,u) = \frac{1}{4} \partial_u^4 \widetilde{K} (t,u),\quad   \partial_t \widetilde{K} (T,u) = \partial_t K_{\alpha/2,T/2} (\tfrac{T}{2},u) \equiv 0.
		\end{equation}
		Therefore, the term \eqref{d1} reduces to 
		\[\int_{0}^T \int_{\R}  \frac{1}{8} \big(\partial_u^2 K (t,u)\big)^2 \,du\, dt.\]
		Note also that for all $u \in \R$,
		\begin{equation}\label{p2}
			\partial_u^2 \widetilde{K} (T,u) = 2 \partial_{u}^2 K_{\alpha/2,T/2} (\tfrac{T}{2},u) = 0, \quad \partial_u \widetilde{\mu}_0 (u) =  \partial_{u}^2 K_{\alpha/2,T/2} (\tfrac{T}{2},u) = 0.
		\end{equation}
		The integration by parts formula implies that the integral of the second integrand in \eqref{d2} is zero. This reduces \eqref{d2} to 
		\[\int_{\R} \frac{1}{4} \big(\partial_u K (T,u)\big)^2 \,du.\]
		By \eqref{p2}, the term \eqref{d3} equals
		\[\int_{0}^T \int_{\R} \Big\{ \frac{1}{8}  \big(\partial_u \mu_0(u)\big)^2 
		- \frac{1}{4} \partial_u \mu_0(u) \partial_u^2 K (t,u) 
		- \frac{1}{4} \partial_u \mu_0(u) \partial_u^2 \widetilde{K} (t,u) \Big\} \,du\, dt.\]
		Since for all $u \in \R$,
		\[\int_0^T \partial_u^2 \widetilde{K} (t,u) dt =- \int_{0}^{T/2}\partial_u^2   K_{\alpha/2,T/2} \big(\tfrac{T}{2}-t,u\big)dt+\int_{T/2}^{T}\partial_u^2   K_{\alpha/2,T/2} \big(t-\tfrac{T}{2},u\big)dt = 0,  \]
		we finally write \eqref{d3} as
		\[\int_{0}^T \int_{\R} \Big\{ \frac{1}{8}  \big(\partial_u \mu_0(u)\big)^2 
		- \frac{1}{4} \partial_u \mu_0(u) \partial_u^2 K (t,u) 
		\Big\} \,du\, dt.\]
		For the last term \eqref{d4}, first note that the integral of the fourth integrand is zero by integration by parts and \eqref{p2}. Since $\partial_u \widetilde{K} (T,u) = 2 \tilde{\mu_0} (0)$,  the second and the last integrands cancel out. Therefore, \eqref{d4} is equal to
		\[\int_{\R} \frac{1}{2} \mu_0(u)^2 - \frac{1}{2} \mu_0 (u) \partial_u K (T,u)\,du.\]
		To sum up, we have shown that
		\begin{multline*}
			G (K+\widetilde{K},\mu_0+\widetilde{\mu}_{0} )   - G(\widetilde{K},\widetilde{\mu}_0) \geq \frac{1}{8} \int_{0}^T \int_{\R}  \Big( \partial_u^2 K (t,u)  - \partial_u \mu_0(u) \Big)^2\,du\,dt\\
			+ \int_{\R} \frac{1}{4} \mu_0(u)^2 + \frac{1}{4} \Big( \mu_0(u) -\partial_u K (T,u) \Big)^2 \,du \geq 0.
		\end{multline*}
		This proves the claim and then concludes the proof of the theorem.
	\end{proof}

	\section{Proof of Theorem \ref{thm:main_1}}\label{sec:proofmain1}
	
	In this section, we prove Theorem \ref{thm:main_1}. We prove exponential tightness in Lemma \ref{lem:exptight}. In subsections \ref{subsec:upper} and \ref{subsec:lower}, we prove weak moderate deviation upper and lower bounds respectively.

	In order to make \eqref{cd} rigorous, we need to introduce an approximation function. For $n \geq 1$, let 
	\[G_n (u) = \chi_{\{u>0\}} (1-u/n)^+, \quad u \in \R.\]
	Multiplying by $G_n (x/N)$ on both hand sides of \eqref{current_density}, and summing over $x$, we have
	\begin{equation}\label{current_empiricalM}
		\<\mu^N_T,G_n\> - \<\mu^N_0,G_n\> = - \frac{1}{nNa_N} \sum_{x=0}^{nN-1} J_{x,x+1} (TN^2) + \frac{1}{a_N} J_{-1,0} (TN^2).
	\end{equation}
	The above formula permits us to derive moderate deviations for the current once we show the first term on the right-hand side is super-exponentially small, \emph{cf.}~Lemma \ref{lem:superexponential}. Then we  prove moderate deviations for tagged particle positions based on \eqref{jpositive} and \eqref{jnegative}.
	
	\subsection{A super-exponential estimate.} In this subsection, We  show that the first term on the right hand side of \eqref{current_empiricalM} is super-exponentially small
	by exploiting the exponential martingale associated to the currents as in \cite[Proposition 3.1]{sethuraman2013large} and based on a standard estimation on the largest eigenvalue of the perturbation of the generator.
	
	\begin{lemma}\label{lem:superexponential}
		For any $\delta > 0$,
		\[\limsup_{n \rightarrow \infty} 	\limsup_{N \rightarrow \infty} \frac{N}{a_N^2} \log \P \Big( \big| \frac{1}{nNa_N} \sum_{x=0}^{nN-1} J_{x,x+1} (tN^2)   \big|> \delta \Big) = - \infty.\]
	\end{lemma}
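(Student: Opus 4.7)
The plan is to follow the Chebyshev--Feynman--Kac approach suggested by the reference to \cite[Proposition 3.1]{sethuraman2013large}, whose key ingredient is a spectral bound on a perturbation of $\gen$. Because the Poisson clocks driving different bonds are independent, for every $\tilde\theta\in\R$ the process
\[
Z_t^{\tilde\theta} = \exp\Big(\tilde\theta\sum_{x=0}^{nN-1} J_{x,x+1}(t) - \int_0^t V_{\tilde\theta}(\eta_s)\,ds\Big),
\]
with
\[
V_{\tilde\theta}(\eta) = \sum_{x=0}^{nN-1}\tfrac{1}{2}\Big[\eta(x)(1-\eta(x+1))(e^{\tilde\theta}-1) + \eta(x+1)(1-\eta(x))(e^{-\tilde\theta}-1)\Big],
\]
is a nonnegative mean-one martingale; equivalently, $h(\eta,t) := \E_\eta[\exp(\tilde\theta\sum_x J_{x,x+1}(t))]$ solves the Feynman--Kac equation $\partial_t h = (\gen + V_{\tilde\theta})h$ with $h(\eta,0)\equiv 1$. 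First I would apply exponential Chebyshev to the upper tail and then pass to $L^2(\nu_\rho)$ via Cauchy--Schwarz, using the fact that $d\nu_\rho^*/d\nu_\rho = \rho^{-1}\chi_{\{\eta(0)=1\}}$ is bounded, so the lemma reduces to the inequality
\[
\bP\Big(\tfrac{1}{nNa_N}\sum_{x=0}^{nN-1} J_{x,x+1}(TN^2) > \delta\Big) \leq C_\rho\,\exp\!\big(-\tilde\theta\delta nNa_N + TN^2\,\lambda(V_{\tilde\theta})\big),
\]
where $\lambda(V_{\tilde\theta}) = \sup_{\|f\|_{L^2(\nu_\rho)}=1}\{\langle V_{\tilde\theta}f,f\rangle_{\nu_\rho} - D(f)\}$ and $D$ denotes the Dirichlet form of $\gen$.

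The crux---and the step I expect to be the main obstacle---is producing a bound $\lambda(V_{\tilde\theta}) \leq C\tilde\theta^2 nN$ that is purely quadratic in $\tilde\theta$. Taylor-expanding gives
\[
V_{\tilde\theta}(\eta) = \tfrac{\tilde\theta}{2}\big(\eta(0)-\eta(nN)\big) + \tfrac{\tilde\theta^2}{4}\sum_{x=0}^{nN-1}|\eta(x)-\eta(x+1)| + O(\tilde\theta^3 nN);
\]
the quadratic piece is dispatched at once by $\|\cdot\|_\infty \leq \tilde\theta^2 nN/4$, but the linear piece, if bounded naively by $|\tilde\theta|/2$, would produce a term of order $|\tilde\theta|TN^2$ in the exponent that no permissible choice of $\tilde\theta$ can absorb against $\tilde\theta\delta nNa_N$. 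The remedy will be to exploit the telescoping $\eta(0)-\eta(nN) = -\sum_{y=0}^{nN-1}(\eta(y+1)-\eta(y))$ together with the invariance of $\nu_\rho$ under each transposition $\eta\mapsto\eta^{y,y+1}$, which yields the integration-by-parts identity
\[
2\int(\eta(y)-\eta(y+1))f^2\,d\nu_\rho = \int(\eta(y)-\eta(y+1))\big(f(\eta)-f(\eta^{y,y+1})\big)\big(f(\eta)+f(\eta^{y,y+1})\big)\,d\nu_\rho.
\]
Cauchy--Schwarz followed by Young's inequality with weight of order $1/|\tilde\theta|$ then absorbs each of the $nN$ gradient terms into a portion of $D(f)$ at cost $O(\tilde\theta^2)$; summing gives the linear contribution $\langle\tfrac{\tilde\theta}{2}(\eta(0)-\eta(nN))f,f\rangle_{\nu_\rho} \leq \tfrac{1}{2}D(f) + O(\tilde\theta^2 nN)$, and consequently the stated eigenvalue bound uniformly for $|\tilde\theta|\leq 1$.

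Plugging this bound into the Chebyshev exponent and optimizing over $\tilde\theta$---the minimum lies at $\tilde\theta = \delta a_N/(2CTN^2)\ll 1$---produces
\[
\frac{N}{a_N^2}\log\bP\Big(\tfrac{1}{nNa_N}\sum_{x=0}^{nN-1} J_{x,x+1}(TN^2) > \delta\Big) \leq -\frac{\delta^2 n}{4CT} + o(1),
\]
which tends to $-\infty$ after sending first $N\to\infty$ and then $n\to\infty$; the Radon--Nikodym prefactor $C_\rho$ and the $O(\tilde\theta^3 nN)$ Taylor remainder each contribute only $o(a_N^2/N)$ to $\log\bP$ and are negligible. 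The lower-tail estimate follows from the same argument applied with $\tilde\theta<0$, since $\lambda(V_{\tilde\theta})$ is even in $\tilde\theta$ to leading order. Combining the two tails yields the claimed super-exponential estimate.
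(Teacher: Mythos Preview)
Your approach is correct and essentially identical to the paper's: exponential Chebyshev, the exponential martingale for the current, a Feynman--Kac eigenvalue bound, and the integration-by-parts trick (change of variables $\eta\mapsto\eta^{y,y+1}$ plus Cauchy--Schwarz with weight of order $1/|\tilde\theta|$) to absorb the linear telescoping term into the Dirichlet form. The only organizational difference is that the paper carries a free parameter $K$ (your $\tilde\theta$ is $2Ka_N/(nN^2)$), sends $n\to\infty$ to kill the resulting $O(K^2/n)$ error, and then lets $K\to\infty$; this is equivalent to your direct optimization in $\tilde\theta$.

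One technical inaccuracy is worth flagging: the assertion that $h(\eta,t)=\E_\eta[e^{\tilde\theta\sum J}]$ solves $\partial_t h=(\gen+V_{\tilde\theta})h$ is not correct. The semigroup $Q_tg(\eta)=\E_\eta[e^{\tilde\theta J_t}g(\eta_t)]$ has as generator the \emph{tilted} operator with jump rates $\tfrac{1}{2}e^{\pm\tilde\theta}$ across the marked bonds, which differs from $\gen+V_{\tilde\theta}$ as an operator (check any bond with $\eta(y)\neq\eta(y+1)$). The paper avoids this issue by first applying Cauchy--Schwarz against the mean-one martingale with doubled parameter, obtaining
\[
\E_{\nu_\rho}\big[e^{\tilde\theta\sum J_{x,x+1}(t)}\big]\;\leq\;\E_{\nu_\rho}\Big[\exp\Big(\int_0^{t}V_{2\tilde\theta}(\eta_s)\,ds\Big)\Big]^{1/2},
\]
a genuine time-integral functional to which the standard Feynman--Kac variational bound \cite[Lemma A.1.7.2]{klscaling} applies directly. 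After this adjustment your estimate $\lambda(V_{\tilde\theta})\leq C\tilde\theta^2 nN$ and the remainder of your argument go through verbatim, with $\tilde\theta$ replaced by $2\tilde\theta$, which is harmless for the conclusion.
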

	
	\begin{proof}
		For any $K > 0$, by Markov inequality, we bound the expression in the lemma by
		\begin{equation}\label{est_1}
			-\delta K + \frac{N}{a_N^2} \log \E \Big[ \exp \Big\{\big| \frac{a_NK}{nN^2} \sum_{x=0}^{nN-1} J_{x,x+1} (tN^2)   \big| \Big\}\Big].
		\end{equation}
		Since $e^{|x|} \leq e^x + e^{-x}$ and $\log (a+b) \leq 2 \max \{\log a,\log b\}$ for any $a,b >0$, we could remove the absolute value above. We start with the observation that
		\[\exp \Big\{\frac{2 a_NK}{nN^2} \sum_{x=0}^{nN-1} J_{x,x+1} (tN^2)  -2 \Gamma_{n,K}^N (t;\eta_\cdot)\Big\}\]
		is a mean one martingale, where 
		\begin{multline*}
			\Gamma_{n,K}^N (t;\eta_\cdot) = (1/4)\big(e^{2a_NK/(nN^2)}-1\big) \sum_{x=0}^{nN-1} \int_0^{tN^2} \eta_s(x) (1-\eta_s(x+1)) ds \\
			+ (1/4)\big(e^{-2a_NK/(nN^2)}-1\big) \sum_{x=0}^{nN-1} \int_0^{tN^2} \eta_s(x+1) (1-\eta_s(x)) ds.
		\end{multline*}
		The above martingale comes from the fact that $\{J_{x,x+1}^{\pm} (t)\}_{x\in \Z}$ are mutually independent compound Poisson processes since there are no jumps occurring at the same time, and that the intensity for $J_{x,x+1}^+ (t)$ (resp.\;for $J_{x,x+1}^- (t)$) is $(1/2) \eta_t (x) (1-\eta_t(x+1))$ (resp.\;$(1/2)\eta_t (x+1) (1-\eta_t(x))$).  Above, $J_{x,x+1}^+ (t)$  (resp.\;$J_{x,x+1}^- (t)$) is the number of jumps of particles from $x$ (resp.\;$x+1$) to $x+1$ (resp.\;$x$) up to time $t$, and therefore $J_{x,x+1} = J_{x,x+1}^+ - J_{x,x+1}^-$. 
		
		Using the basic inequality $E[e^X] \leq (E[e^{2(X-Y)}])^{1/2} (E[e^{2Y}])^{1/2}$ for any two random variables $X,Y$, we may bound the second term in \eqref{est_1} by
		\[ \frac{N}{2 a_N^2} \log \E \Big[ e^{2 \Gamma_{n,K}^N (t;\eta_\cdot)} \Big].\]
		We further write $\Gamma_{n,K}^N (t;\eta_\cdot) = \Gamma_{n,K}^{N,1} (t;\eta_\cdot) + \Gamma_{n,K}^{N,2} (t;\eta_\cdot)$, where
		\begin{multline*}
			\Gamma_{n,K}^{N,1} (t;\eta_\cdot) = (1/4)\big(e^{2a_NK/(nN^2)}-1 - 2a_NK/(nN^2)\big) \sum_{x=0}^{nN-1} \int_0^{tN^2} \eta_s(x) (1-\eta_s(x+1)) ds \\
			+ (1/4)\big(e^{-2a_NK/(nN^2)}-1 + 2a_NK/(nN^2)\big) \sum_{x=0}^{nN-1} \int_0^{tN^2} \eta_s(x+1) (1-\eta_s(x)) ds,
		\end{multline*}
		and
		\[\Gamma_{n,K}^{N,2} (t;\eta_\cdot) = \frac{1}{2} \frac{a_N K}{nN^2} \sum_{x=0}^{nN-1} \int_0^{tN^2} \big(\eta_s(x) - \eta_s(x+1)\big) ds.\]
		Then, we have 
		\begin{equation}\label{est_2}
			\frac{N}{2 a_N^2} \log \E \Big[ e^{2 \Gamma_{n,K}^N (t;\eta_\cdot)} \Big] \leq  \frac{N}{4 a_N^2} \log \E \Big[ e^{4 \Gamma_{n,K}^{N,1} (t;\eta_\cdot)} \Big] + \frac{N}{4 a_N^2} \log \E \Big[ e^{4 \Gamma_{n,K}^{N,2} (t;\eta_\cdot)} \Big].
		\end{equation}

		Using the basic inequality $e^x - 1-x \leq (1/2)x^2 e^{|x|}$, there exists a finite constant $C$ so that we may bound $\Gamma_{n,K}^{N,1} (t;\eta_\cdot) $ from above by  $Cta_N^2K^2 / (nN)$. Whence, the first term on the right-hand side in \eqref{est_2} is bounded by $CtK^2/n$, which converges to zero as $n \rightarrow \infty$. By Feynman-Kac formula (\emph{cf.}\;\cite[Lemma A.1.7.2]{klscaling}), we may bound the second term in \eqref{est_2} by
		\[\frac{tN^3}{4a_N^2} \sup_{f:\nu_\rho-\text{density}}\Big\{ \int\frac{2 a_N K}{nN^2} \sum_{x=0}^{nN-1} \big(\eta(x) - \eta(x+1)\big) f(\eta)\nu_\rho (d \eta) - \<-\gen \sqrt{f},\sqrt{f}\>_\rho\Big\}.\]
		Above, $f$ is a $\nu_\rho-$density if $f \geq 0$ and $\int f d \nu_\rho = 1$, and for two local functions $f,g: \Omega \rightarrow \R$,
		\[\<f,g\>_\rho = \int f(\eta) g(\eta)\,\nu_\rho (d \eta).\]
		We remark that although the initial distribution associated to the measure $\P$ is $\nu_\rho^*$, by the basic coupling stated in Remark \ref{rmk:initialmeasure}, there is no difference if we regard the initial measure as the equilibrium measure $\nu_\rho$. Therefore, in the following argument, we will not distinguish  processes with the above two initial measures. Then, direct calculations yield that
		\[ \<-\gen \sqrt{f},\sqrt{f}\>_\rho = \frac{1}{4} \sum_{x \in \Z} \int \big(\sqrt{f}(\eta^{x,x+1}) - \sqrt{f} (\eta)\big)^2 \nu_\rho (d \eta). \]
		Making the change of variables $\eta \mapsto \eta^{x,x+1}$ and using the Cauchy-Schwarz inequality, for any $A > 0$, we bound the first term in the above brace by
		\begin{multline*}
			\frac{a_N K}{nN^2} \sum_{x=0}^{nN-1} \int \big(\eta(x)-\eta(x+1)\big) \big( f (\eta) -f (\eta^{x,x+1}) \big) \nu_\rho (d \eta) 
			\\
			\leq  \frac{1}{2} \frac{a_N K}{nN^2} \sum_{x=0}^{nN-1} \Big\{  A \int \big(\sqrt{f}(\eta^{x,x+1}) - \sqrt{f} (\eta)\big)^2 \nu_\rho (d \eta) + \frac{1}{A}   \int \big(\sqrt{f}(\eta^{x,x+1}) + \sqrt{f} (\eta)\big)^2 \nu_\rho (d \eta) \Big\}\\
			\leq  \frac{a_N KA}{2 nN^2}  \<-\gen \sqrt{f},\sqrt{f}\>_\rho + \frac{2 a_NK}{NA}.
		\end{multline*}
		Taking $A = 2 nN^2 / (a_N K)$, the second term in \eqref{est_2} is bounded from above by
		\[\frac{tN^3}{4a_N^2} \times \frac{2 a_N K}{N} \times \frac{a_N K}{2 nN^2} = \frac{tK^2}{4n},\]
		which converges to zero as $n \rightarrow \infty$.
		
		In conclusion, we have shown that for any $K > 0$, 
		\[\limsup_{n \rightarrow \infty} 	\limsup_{N \rightarrow \infty} \frac{N}{a_N^2} \log \P \Big( \big| \frac{1}{nNa_N} \sum_{x=0}^{nN-1} J_{x,x+1} (tN^2)   \big|> \delta \Big) \leq - \delta K.\]
		We conclude the proof by letting $K \rightarrow \infty$.
	\end{proof}
	
	\subsection{Exponential tightness.} In this subsection, we show the rescaled current and tagged particle positions are exponentially tight.
	
	\begin{lemma}[Exponential tightness]\label{lem:exptight} We have the following estimates,
		\begin{align}
			\limsup_{M \rightarrow \infty} \limsup_{N \rightarrow \infty} \frac{N}{a_N^2} \log \bP \Big( \Big|  \frac{1}{a_N} J_{-1,0} (TN^2)\Big| > M \Big) = - \infty,\label{exptight_current}\\
			\limsup_{M \rightarrow \infty} \limsup_{N \rightarrow \infty} \frac{N}{a_N^2} \log \bP \Big( \Big|  \frac{1}{a_N} X (TN^2)\Big| > M \Big) = - \infty.\label{exptight_tp}
		\end{align}
	\end{lemma}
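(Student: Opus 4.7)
The plan is to handle \eqref{exptight_current} first. I would exploit the exact identity \eqref{current_empiricalM} to write
\[
\tfrac{1}{a_N}J_{-1,0}(TN^2) \;=\; \<\mu^N_T - \mu^N_0,\, G_n\> \;+\; R_n^N, \qquad R_n^N \;:=\; \tfrac{1}{nNa_N}\sum_{x=0}^{nN-1} J_{x,x+1}(TN^2),
\]
and apply the union bound
\[
\P\Big(\big|\tfrac{1}{a_N}J_{-1,0}(TN^2)\big| > 2M\Big) \;\leq\; \P(|\<\mu^N_T - \mu^N_0, G_n\>| > M) \;+\; \P(|R_n^N| > M).
\]
The second probability is under direct control: Lemma \ref{lem:superexponential} yields $\limsup_n\limsup_N (N/a_N^2)\log\P(|R_n^N|>M) = -\infty$. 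For the first probability, Theorem \ref{thm:mdphl} together with the contraction principle (after a standard mollification of the piecewise-affine $G_n$ by a Schwartz function) gives, for each fixed $n$,
\[
\limsup_N \tfrac{N}{a_N^2}\log\P\big(|\<\mu^N_T - \mu^N_0, G_n\>| > M\big) \;\leq\; -\inf\{\mathcal{Q}(\mu) : |\<\mu_T - \mu_0, G_n\>| \geq M\}.
\]

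To lower-bound this infimum, I would use the continuity equation $\partial_t \mu + \partial_u J = 0$ implicit in \eqref{mu}. Integration by parts gives $\<\mu_T - \mu_0, G_n\> = \int_0^T\!\!\int_\R J(t,u)\,G_n'(u)\,du\,dt$, and Cauchy--Schwarz together with $\|G_n'\|_{L^2([0,T]\times\R)}^2 = T/n$ (since $G_n'(u) = -\frac{1}{n}\chi_{(0,n)}$) then yields
\[
|\<\mu_T - \mu_0, G_n\>|^2 \;\leq\; \tfrac{T}{n}\,\|J\|_{L^2([0,T]\times\R)}^2 \;\leq\; \tfrac{2T\rho(1-\rho)}{n}\,\mathcal{Q}(\mu),
\]
where the last step uses the representation \eqref{q}. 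Hence the infimum is at least $nM^2/(2T\rho(1-\rho))$. Given any $A > 0$ and any $M > 0$, one can then choose $n$ large enough that both bounds are dominated by $-A$, so that $\limsup_N (N/a_N^2)\log\P(|J_{-1,0}/a_N|>2M) \leq -A$. Since $A$ is arbitrary, the inner $\limsup_N$ equals $-\infty$ for every $M$, and \eqref{exptight_current} follows.

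For the tagged particle \eqref{exptight_tp}, the plan is to reduce everything to the current via the monotone inclusion
\[
\{X(TN^2) > Ma_N\} \;\subseteq\; \big\{J_{-1,0}(TN^2) \geq \tfrac{1}{2}\rho Ma_N\big\} \;\cup\; \Big\{\sum_{x=0}^{\lceil Ma_N\rceil-1}\eta_{TN^2}(x) < \tfrac{1}{2}\rho Ma_N\Big\},
\]
which is a direct consequence of \eqref{jpositive}: on $\{X > Ma_N\}$ one has $J_{-1,0}(TN^2) = \sum_{x=0}^{X-1}\eta_{TN^2}(x) \geq \sum_{x=0}^{\lceil Ma_N\rceil-1}\eta_{TN^2}(x)$ because $\eta_T \geq 0$. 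The first event is super-exponentially small as $M \to \infty$ by \eqref{exptight_current}. For the second event, the basic coupling (Remark \ref{rmk:initialmeasure}) with the stationary SSEP started from $\nu_\rho$ keeps the two configurations within a single discrepancy, so the two sums differ by at most one; this reduces the bound to $\P^{\nu_\rho}(\mathrm{Binomial}(\lceil Ma_N\rceil, \rho) < \tfrac{1}{2}\rho Ma_N + 1)$, which Chernoff estimates by $e^{-cMa_N}$ with $c = c(\rho) > 0$. Therefore $(N/a_N^2)\log\P \leq -cMN/a_N \to -\infty$ for each fixed $M$, since $N/a_N \to \infty$. The case $X < -Ma_N$ is analogous via \eqref{jnegative}.

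The main difficulties are (i) establishing the sharp lower bound $nM^2/(2T\rho(1-\rho))$ on the rate function for the functional $\<\mu_T - \mu_0, G_n\>$, for which the variational representation \eqref{q} and a Cauchy--Schwarz inequality against $G_n'$ are the essential tools, and (ii) handling the random endpoint of $\sum_{x=0}^{X-1}\eta_T(x)$, which is resolved by the monotone containment above and thereby reduced to a fixed-length Bernoulli Chernoff estimate.
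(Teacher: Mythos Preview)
Your argument for \eqref{exptight_tp} is essentially the same as the paper's: both reduce to \eqref{exptight_current} via the inclusion coming from \eqref{jpositive}, together with a Chernoff bound on a Bernoulli sum (the paper takes a general $\theta<\rho$ where you take $\theta=\rho/2$, and you are a bit more explicit about the coupling with $\nu_\rho$).

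For \eqref{exptight_current}, however, there is a genuine gap. The function $G_n(u)=\chi_{\{u>0\}}(1-u/n)^+$ has a jump at $u=0$, so its distributional derivative is
\[
G_n'(u)=\delta_0(u)-\tfrac{1}{n}\chi_{(0,n)}(u),
\]
not $-\tfrac{1}{n}\chi_{(0,n)}$. Consequently the integration by parts produces
\[
\langle\mu_T-\mu_0,G_n\rangle=\int_0^T J(t,0)\,dt-\tfrac{1}{n}\int_0^T\!\!\int_0^n J(t,u)\,du\,dt,
\]
exactly as in the paper's \eqref{upper1}. The boundary term $\int_0^T J(t,0)\,dt$ is the flux and is \emph{not} controlled by $\|J\|_{L^2}$, so your Cauchy--Schwarz bound and the conclusion $\inf\{\mathcal Q(\mu):|\langle\mu_T-\mu_0,G_n\rangle|\geq M\}\geq nM^2/(2T\rho(1-\rho))$ are both wrong. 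In fact this infimum stays bounded as $n\to\infty$: for the minimizer $\tilde\mu_{M,T}$ of \eqref{mu_tilde} one has $\langle\tilde\mu_{M,T}(T,\cdot)-\tilde\mu_{M,T}(0,\cdot),G_n\rangle\to M$ while $\mathcal Q(\tilde\mu_{M,T})=\mathbb J(M)<\infty$. Worse, your final claim that $\limsup_N(N/a_N^2)\log\bP(|J_{-1,0}(TN^2)/a_N|>2M)=-\infty$ for \emph{every} $M>0$ contradicts the very MDP you are trying to establish, which gives this limsup equal to $-\mathbb J(2M)>-\infty$.

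The paper avoids the rate-function route entirely. It applies the exponential Markov inequality and bounds $\bE[\exp\{(a_N/N)J_{-1,0}(TN^2)\}]$ directly: after rewriting via \eqref{current_empiricalM} and Cauchy--Schwarz, the averaged-current piece is handled as in Lemma~\ref{lem:superexponential}, while the terms $\bE[\exp\{(3a_N^2/N)\langle\mu_T^N,G_n\rangle\}]$ and its time-$0$ analogue are bounded using stationarity of $\nu_\rho$ and the product structure (via $e^x\leq 1+x+\tfrac{x^2}{2}e^{|x|}$), yielding a bound $Cn$ independent of $M$. One then fixes $n$ and lets $M\to\infty$.
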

	
	\begin{proof}
		We first prove \eqref{exptight_current}. By Markov inequality, the expression in \eqref{exptight_current} is bounded by
		\[-M + \frac{N}{a_N^2} \log \bE \Big[ \exp \Big\{ \frac{a_N}{N} \big|   J_{-1,0} (TN^2)\big| \Big\}\Big].\]
		We shall prove the second term above is bounded uniformly for $N$ large enough. As in the proof of Lemma \ref{lem:superexponential}, we could remove the absolute value inside the exponential. Recall Eq. \eqref{current_empiricalM}. Then for any $n > 0$, we may rewrite the second term above as
		\[\frac{N}{a_N^2} \log \bE \Big[ \exp \Big\{ \frac{a_N^2}{N} \<\mu^N_T,G_n\> -  \frac{a_N^2}{N} \<\mu^N_0,G_n\> + \frac{a_N}{nN^2} \sum_{x=0}^{nN-1} J_{x,x+1} (TN^2) \Big\}\Big].\]
		By Cauchy-Schwarz inequality, the last formula is bounded by
		\begin{multline*}
			\frac{N}{3a_N^2} \log \bb{E} \Big[ \exp \Big\{ \frac{3a_N^2}{N} \<\mu^N_T,G_n\> \Big\} \Big] + 	\frac{N}{3a_N^2} \log \bb{E} \Big[ \exp \Big\{ -\frac{3a_N^2}{N} \<\mu^N_0,G_n\> \Big\} \Big] \\+ 	\frac{N}{3a_N^2} \log \bb{E} \Big[ \exp \Big\{ \frac{3a_N}{nN^2} \sum_{x=0}^{nN-1} J_{x,x+1} (TN^2) \Big\} \Big].
		\end{multline*}
		Taking $K=3$ in  the proof of Lemma \ref{lem:superexponential}, the third term above is bounded by $CT/n$ for some finite constant $C$. Next, we only deal with the first term above since the second one could be handled in the same way. Using the basic inequality $e^x \leq 1 + x + (x^2/2) e^{|x|}$, we could bound the the first term above by 
		\[\frac{C N}{a_N^2} \sum_{x \in \Z} \frac{a_N^2}{N^2} G_n^2 (\tfrac{x}{N}) e^{3a_N/N} \leq C n\]
		for $N$ large enough. This proves \eqref{exptight_current} by letting $M \rightarrow \infty$.
		
		\medspace
		
		Now we use \eqref{exptight_current} to prove \eqref{exptight_tp}. According to the spatial homogeneity of our process, 
		\[\P\Big(\Big|\frac{1}{a_N} X (TN^2)\Big|> M\Big)=2\P\Big(\frac{1}{a_N} X (TN^2)> M\Big).\]
		Therefore, we could remove the absolute value inside the probability  in \eqref{exptight_tp} and only need to consider the case  $J_{-1,0}(TN^2)>0$.
		Using the identity \eqref{jpositive},  for any $\theta>0$,
		\[
		\Big\{\frac{1}{a_N}X(TN^2)>M\Big\}\bigcap\Big\{\frac{1}{a_N}J_{-1,0}(TN^2)\leq \theta M\Big\}
		\subseteq \Big\{\frac{1}{a_N}\sum_{x=0}^{Ma_N}\eta_{TN^2}(x)\leq \theta M\Big\}
		\]
		Hence, for $0<\theta<\rho$,
		\begin{equation}\label{equ 4.7}
			\P\Big(\frac{1}{a_N}X(TN^2)>M\Big)\leq \P\Big(\frac{1}{a_N}J_{-1,0}(TN^2)\geq \theta M\Big)+\P\Big(\frac{1}{a_N}\sum_{x=0}^{Ma_N}\eta_{TN^2}(x)\leq \theta M\Big).  
		\end{equation}
		Since $\nu_\rho$ is an invariant distribution of the SSEP and $\theta<\rho$, according to the classic large deviation theory  of the sum of i.i.d. random variables,
		\[
		\limsup_{N\rightarrow+\infty}\frac{1}{a_N}\log\P\Big(\frac{1}{a_N}\sum_{x=0}^{Ma_N}\eta_{TN^2}(x)\leq \theta M\Big)<0.
		\]
		In particular,
		\begin{equation}\label{equ 4.8}
			\limsup_{N\rightarrow+\infty}\frac{N}{a_N^2}\log\P\Big(\frac{1}{a_N}\sum_{x=0}^{Ma_N}\eta_{TN^2}(x)\leq \theta M\Big)=-\infty.
		\end{equation}
		Equations \eqref{exptight_current}, \eqref{equ 4.7} and \eqref{equ 4.8} imply that
		\[
		\limsup_{M \rightarrow \infty} \limsup_{N \rightarrow \infty} \frac{N}{a_N^2} \log \bP \Big( \frac{1}{a_N} X (TN^2)> M \Big) = - \infty.
		\]
		This concludes the proof.
	\end{proof}

	\subsection{Weak MDP upper bound.}\label{subsec:upper}  In this subsection, we prove Theorem \ref{thm:main_1} for the upper bound. By the exponential tightness in Lemma \ref{lem:exptight}, we only need to prove the upper bound over closed intervals.

	We first prove the upper bound for the currents. For any closed interval $[a,b]$, $a < b$, let $[c_k,c_{k+1}], 1 \leq k \leq m$ be a partition of the interval. By \eqref{current_empiricalM},
	\begin{multline*}
		\limsup_{N \rightarrow \infty} \frac{N}{a_N^2} \log \P \Big(\frac{1}{a_N} J_{-1,0} (TN^2) \in [a,b]\Big) \\
		\leq \max_{1 \leq k \leq m} 	\limsup_{N \rightarrow \infty} \frac{N}{a_N^2} \log \P \Big(\<\mu^N_T,G_n\> - \<\mu^N_0,G_n\> + \frac{1}{nNa_N} \sum_{x=0}^{nN-1} J_{x,x+1} (TN^2) \in [c_k,c_{k+1}]\Big).
	\end{multline*}
	Using Lemma \ref{lem:superexponential}, the last line is bounded by
	\[	\limsup_{m \rightarrow \infty}	\limsup_{\delta \rightarrow 0} \limsup_{n \rightarrow \infty} \max_{1 \leq k \leq m} 	\limsup_{N \rightarrow \infty} \frac{N}{a_N^2} \log \P \Big(\<\mu^N_T,G_n\> - \<\mu^N_0,G_n\>  \in [c_k-\delta,c_{k+1}+\delta]\Big).\]
	By Theorem \ref{thm:mdphl}, we bound the last formula by
	\begin{equation}\label{current_upper}
		\limsup_{m \rightarrow \infty}	\limsup_{\delta \rightarrow 0} \limsup_{n \rightarrow \infty} \max_{1 \leq k \leq m}  - \inf \{\mathcal{Q} (\mu):  \<\mu_T,G_n\> - \<\mu_0,G_n\>  \in [c_k-\delta,c_{k+1}+\delta]\}.
	\end{equation}
	For any $\varepsilon > 0$, we could find a $\mu^{k,n,\delta}_\varepsilon$ such that
	\[\<\mu_\varepsilon^{k,n,\delta} (T,\cdot),G_n\> - \<\mu_\varepsilon^{k,n,\delta} (0,\cdot),G_n\>  \in [c_k-\delta,c_{k+1}+\delta]\]
	and that the infimum in \eqref{current_upper} is bounded from below by $\mathcal{Q} (\mu_\varepsilon^{k,n,\delta}) - \varepsilon$. We claim that there exists a constant $C_0 $ such that for any $\delta,k$ and $n$ large enough, 
	\begin{equation}\label{q_upper}
		\mathcal{Q} (\mu_\varepsilon^{k,n,\delta}) \leq C_0 + \varepsilon.
	\end{equation}
	This permits us to extract a subsequence on which the limsup of $- \mathcal{Q} (\mu_\varepsilon^{k,n,\delta})$ is attained as $n \uparrow \infty$, $\delta \downarrow 0$ and $m \uparrow \infty$ and on which also $\mu_\varepsilon^{k,n,\delta} \rightarrow \mu_\varepsilon^*$ in $D([0,T],\mathcal{S}^\prime (\R))$ for some $\mu_\varepsilon^*$. We could also choose the subsequence such that $c_k \rightarrow \alpha \in [a,b]$ for some $\alpha$ since $[a,b]$ is compact. Moreover, 
	\[\<\mu_\varepsilon^* (T, \cdot), \chi_{[0,\infty)}\>- \<\mu_\varepsilon^* (T, \cdot), \chi_{[0,\infty)}\> = \alpha.\]
	Therefore,  \eqref{current_upper} is bounded from above by
	\[\sup_{\alpha \in [a,b]} - \{\mathcal{Q}(\mu^*_\varepsilon),\,\<\mu_\varepsilon^* (T, \cdot), \chi_{[0,\infty)}\>- \<\mu_\varepsilon^* (T, \cdot), \chi_{[0,\infty)}\> = \alpha\} + \varepsilon \leq  - \inf_{\alpha \in [a,b]} \bb{J} (\alpha) + \varepsilon.\]
	Letting $\varepsilon \rightarrow 0$, we conclude the proof for the upper bound for the currents over closed intervals.
	
	It remains to prove $\eqref{q_upper}$. Recall $\tilde{\mu}_{\alpha,T}$ defined in  \eqref{mu_tilde} and that
	\[\int_0^T \tilde{J}_{\alpha,T} (t,0) dt = \int_0^\infty \tilde{\mu}_{\alpha,T} (T,u) - \tilde{\mu}_{\alpha,T} (0,u) du = \alpha.\]
	Then 
	\begin{multline}\label{upper1}
		\<\tilde{\mu}_{\alpha,T}(T,\cdot),G_n\> - \<\tilde{\mu}_{\alpha,T} (0,\cdot),G_n\>  = \int_{0}^T \int_{\R} \partial_t \tilde{\mu}_{\alpha,T} (t,u) G_n (u) du dt \\
		= - \int_{0}^T \int_{\R} \partial_u \tilde{J}_{\alpha,T} (t,u) G_n (u) du dt 
		= \int_0^T \tilde{J}_{\alpha,T} (t,0) dt - \frac{1}{n} \int_0^T \int_0^n \tilde{J}_{\alpha,T} (t,u) du dt\\
		= \alpha + \frac{1}{n} \int_0^T \int_0^n \frac{1}{2} \partial_u \tilde{\mu}_{\alpha,T} (t,u) - \rho (1-\rho) \partial_u \tilde{H}_{\alpha,T} (t,u)\,du\,dt.
	\end{multline}
	Since  $\partial_u \tilde{\mu}_{\alpha,T},\, \partial_u \tilde{H}_{\alpha,T} \in L^2 ([0,T] \times \R)$, by Cauchy-Schwarz inequality, the second term on the right-hand side is bounded by $Cn^{-1/2}$ for some finite constant $C$. Therefore, for any $\alpha \in [a,b]$ and  for any $n$ large enough,
	\[	\<\tilde{\mu}_{\alpha,T}(T,\cdot),G_n\> - \<\tilde{\mu}_{\alpha,T}(0,\cdot),G_n\> \in [\alpha-\delta,\alpha + \delta].\]
	Therefore, the infimum in \eqref{current_upper} is bounded from above by
	\[\sup_{\alpha \in [a,b]} \mathcal{Q} (\mu_\alpha) = \sup_{\alpha \in [a,b]} \mathbb{J} (\alpha) =: C_0.\]
	This proves the claim \eqref{q_upper}.
	
	\medspace
	
	Now we prove the upper bound for the tagged particle positions. We only  prove the upper bound for closed positive intervals $[a, b]$, $0<a<b<+\infty$, and the other cases could be handled in the same way. For sufficiently small $\delta$ and sufficiently large $N$, let $D_{N, a,b,\delta}$ be the event that
	\[
	\sup_{j\in [aa_N-1, ba_N]}\frac{1}{a_N}\Big|\sum_{x=0}^{j}\big(\eta_{TN^2}(x)-\rho\big)\Big|<\delta.
	\]
	Then, the identity  \eqref{jpositive} implies that
	\[
	\Big\{\frac{1}{a_N}X(TN^2)\in[a,b]\Big\}\bigcap D_{N,a,b,\delta}\subseteq \Big\{\frac{1}{a_N}J_{-1,0}(TN^2)\geq \rho a-\delta\Big\}.
	\]
	By standard large deviation theory, for any  $0\leq c <+\infty$ and $\delta>0$, 
	\[
	\limsup_{N\rightarrow+\infty}\frac{1}{a_N}\log \P\Big(\sup_{j\in[0, c a_N]}\Big|\frac{1}{a_N}\sum_{x=0}^j(\eta_{TN^2}(x)-\rho)\Big|>\delta\Big)<0. 
	\]
	In particular,
	\[
	\limsup_{N\rightarrow+\infty}\frac{N}{a_N^2}\log \P\Big( D_{N,a,b,\delta}^c \Big)=-\infty. 
	\]
	Then, according to Theorem \ref{thm:main_2} and  moderate deviation upper bounds for the current,
	\begin{multline*}
		\limsup_{N\rightarrow+\infty}\frac{N}{a_N^2}\log \P\Big(\frac{1}{a_N}X(TN^2)\in [a,b]\Big) 
		=\limsup_{N\rightarrow+\infty}\frac{N}{a_N^2}\log \P\Big(\frac{1}{a_N} X(TN^2)\in [a,b], D_{N,a,b,\delta}\Big)\\
		\leq \limsup_{N\rightarrow+\infty}\frac{N}{a_N^2}\log \P\Big(\frac{1}{a_N} J_{-1,0}(TN^2)\geq \rho a-\delta\Big)
		\leq-\inf_{u\geq \rho a-\delta}\bb{J}(u)
		=-\frac{\sqrt{2\pi}\big(\rho a-\delta\big)^2}{4\rho(1-\rho)\sqrt{T}}.
	\end{multline*}
	Since $\delta$ is arbitrary, let $\delta\rightarrow 0$, then by Theorem \ref{thm:main_2}
	\[\limsup_{N\rightarrow+\infty}\frac{N}{a_N^2}\log \P\Big(\frac{1}{a_N} X(TN^2)\in [a,b]\Big) \leq -\frac{\sqrt{2\pi}\rho a^2}{4(1-\rho)\sqrt{T}}
	=-\inf_{u\in [a, b]}\bb{I}(u).\]
	This concludes the proof of the weak upper bound for the tagged particle positions.
	
	\subsection{Lower bound.}\label{subsec:lower} In this section, we prove Theorem \ref{thm:main_1} for the lower bound. As before, we first investigate the current. Let $O \subset \R$ be a nonempty open set. Let $\alpha \in O$ and $\varepsilon > 0$ such that $(\alpha-\varepsilon,\alpha+\varepsilon) \subset O$. By \eqref{current_empiricalM},  
	\begin{multline*}
		\liminf_{N \rightarrow \infty} \frac{N}{a_N^2} \log \P \Big(\frac{1}{a_N} J_{-1,0} (TN^2) \in O\Big) \\
		\geq 	\limsup_{N \rightarrow \infty} \frac{N}{a_N^2} \log \P \Big(\<\mu^N_T,G_n\> - \<\mu^N_0,G_n\> + \frac{1}{nNa_N} \sum_{x=0}^{nN-1} J_{x,x+1} (TN^2) \in (\alpha-\varepsilon,\alpha+\varepsilon)\Big).
	\end{multline*}
	By Lemma \ref{lem:superexponential}, taking $\delta = \varepsilon /2$, we bound the last line from below by
	\[ \liminf_{n \rightarrow \infty} 	\liminf_{N \rightarrow \infty} \frac{N}{a_N^2} \log \P \Big(\<\mu^N_T,G_n\> - \<\mu^N_0,G_n\>  \in [\alpha-\varepsilon/2,\alpha+\varepsilon/2]\Big).\]
	By Theorem \ref{thm:mdphl}, the last formula is bounded by
	\[ \liminf_{n \rightarrow \infty}  - \inf \{\mathcal{Q} (\mu):  \<\mu_T,G_n\> - \<\mu_0,G_n\>  \in [\alpha-\varepsilon /2,\alpha + \varepsilon/2] \}\]
	Taking $\mu = \tilde{\mu}_{\alpha,T}$ defined in  \eqref{mu_tilde} and by \eqref{upper1}, the last line is bounded below by $- \mathcal{Q} (\tilde{\mu}_{\alpha,T}) = - \mathbb{J} (\alpha)$. Taking the supremum over $\alpha \in O$, we have
	\[\liminf_{N \rightarrow \infty} \frac{N}{a_N^2} \log \P \Big(\frac{1}{a_N} J_{-1,0} (TN^2) \in O\Big) \geq - \inf_{\alpha \in O}  \mathbb{J} (\alpha).\]
	
	\medspace
	
	Now we prove  the lower bound of the tagged particle positions. By Theorem \ref{thm:main_2},  the rate function $\bb{I}$ is even and increasing in $[0, +\infty)$. Hence we only need to check the lower bound for open set $O$ with forms $(a,+\infty)$ or $(-\infty, -a)$ for any $a>0$. According to the spatial homogeneity of the process, we only need to deal with the first case. For $a, \delta>0$, let $B_{N,a,\delta}$ be the event that
	\[
	\sup_{j\in [0, aa_N]}\frac{1}{a_N}\Big|\sum_{x=0}^{j}\big(\eta_{TN^2}(x)-\rho\big)\Big|<\delta,
	\]
	then we have shown in the last subsection that
	\[
	\limsup_{N\rightarrow+\infty}\frac{N}{a_N^2}\log \P\Big( B_{N,a,\delta}^c \Big)=-\infty. 
	\]
	Moreover, \eqref{jpositive} implies that
	\[
	\Big\{\frac{1}{a_N}J_{-1,0}(TN^2)>\rho a+2\delta\Big\}\bigcap B_{N,a,\delta}\subseteq \Big\{\frac{1}{a_N}X(TN^2)>a\Big\}.
	\]
	According to Theorem \ref{thm:main_2} and moderate deviation lower bounds of the current, 
	\begin{align*}
		\liminf_{N\rightarrow+\infty}\frac{N}{a^2_N}\log \P\Big(\frac{1}{a_N}X(TN^2)>a\Big)
		&\geq \liminf_{N\rightarrow\infty}\frac{N}{a^2_N}\log \P\Big(\frac{1}{a_N}J_{-1,0}(TN^2)>\rho a+2\delta, B_{N, a, \delta}\Big)\\
		&=\liminf_{N\rightarrow\infty}\frac{N}{a^2_N}\log \P\Big(\frac{1}{a_N}J_{-1,0}(TN^2)>\rho a+2\delta\Big)\\
		&\geq -\inf_{u>\rho a+2\delta}\bb{J}(u)=-\frac{\sqrt{2\pi}\big(\rho a+2\delta\big)^2}{4\rho(1-\rho)\sqrt{T}}.
	\end{align*}
	Since $\delta$ is arbitrary, let $\delta\rightarrow 0$, then by Theorem \ref{thm:main_2}
	\begin{align*}
		\liminf_{N\rightarrow+\infty}\frac{N}{a^2_N}\log \P\Big(\frac{1}{a_N}X(TN^2)>a\Big)
		\geq -\frac{\sqrt{2\pi}\rho a^2}{4(1-\rho)\sqrt{T}}=-\inf_{u>a}\bb{I}(u).
	\end{align*}
	This concludes the proof of the lower bound.

	\bibliographystyle{plain}
	\bibliography{tagged}
\end{document}